\titleformat{\subsection}[runin]
{\bfseries} {\thesubsection{.}}{0.15cm}{}[.]
\titleformat{\subsubsection}[runin]
{\em}{\thesubsubsection{.}}{0.15cm}{}[.]
\newtheorem{theorem}{Theorem}[section]
\newtheorem{proposition}[theorem]{Proposition}
\newtheorem{claim}[theorem]{Claim}
\newtheorem{lemma}[theorem]{Lemma}
\newtheorem{corollary}[theorem]{Corollary}
\theoremstyle{definition}
\newtheorem{definition}[theorem]{Definition}
\newtheorem{remark}[theorem]{Remark}
\newtheorem{conjecture}[theorem]{Conjecture}
\numberwithin{equation}{section}
\numberwithin{figure}{section}
\newcommand\Ocal{\mathcal{O}}
\newcommand\C{\mathbb{C}}
\newcommand\N{\mathbb{N}}
\newcommand\Z{\mathbb{Z}}
\renewcommand\b{\mathbb{B}}
\renewcommand\c{\mathbb{C}}
\newcommand\n{\mathbb{N}}
\renewcommand\r{\mathbb{R}}
\newcommand\z{\mathbb{Z}}
\newcommand\ggot{\mathfrak{g}}
\newcommand\wt{\widetilde}
\newcommand\dist{\mathrm{dist}}
\newcommand\length{\mathrm{length}}
\def\dist{\mathrm{dist}}
\def\length{\mathrm{length}}
\def\diam{\mathrm{diam}}
\begin{document}


\fancyhead[LO]{Complete complex hypersurfaces in the ball come in foliations}
\fancyhead[RE]{A.\ Alarc\'on}
\fancyhead[RO,LE]{\thepage}

\thispagestyle{empty}


\begin{center}
{\bf\LARGE Complete complex hypersurfaces in the ball come in foliations}

\vspace*{3mm}

%
%
{\large\bf Antonio Alarc\'on}
%
\end{center}


%
%
\vspace*{3mm}

\begin{quote}
{\small
\noindent {\bf Abstract}\hspace*{0.1cm}
In this paper we prove that every smooth complete closed complex hypersurface in the open unit ball $\b_n$ of $\c^n$ $(n\ge 2)$ is a level set of a noncritical holomorphic function on $\b_n$ all of whose level sets are complete. This shows that $\b_n$ admits a nonsingular holomorphic foliation by smooth complete closed complex hypersurfaces and, what is the main point, that every hypersurface in $\b_n$ of this type can be embedded into such a foliation. We establish a more general result in which neither completeness nor smoothness of the given hypersurface is required. 

Furthermore, we obtain a similar result for
complex 
submanifolds of arbitrary positive codimension and prove the existence of a nonsingular holomorphic submersion foliation of $\b_n$ by smooth complete closed complex submanifolds of any pure codimension $q\in\{1,\ldots,n-1\}$.

\smallskip

\noindent{\bf Keywords}\hspace*{0.1cm} 
noncritical holomorphic function, 
holomorphic submersion,
Stein manifold,
complex submanifold,
complete Riemannian manifold,
divisor,
foliation,
fibre.

\smallskip

\noindent{\bf Mathematics Subject Classification (2010)}\hspace*{0.1cm} 
32H02, 
32E10, 
32E30, 
53C12. 
}
\end{quote}



\section{Introduction and main results}
\label{sec:intro}

The question whether there are {\em complete} bounded, smoothly immersed, complex submanifolds in a complex Euclidean space was asked by Yang \cite{Yang1977,Yang1977JDG} 
and affirmatively answered by Jones \cite{Jones1979PAMS} in the late 1970s. Jones constructed a complete bounded immersed complex disc in $\c^2$, an embedded one in $\c^3$, and a properly embedded one in the ball of $\c^4$. The more difficult version of this problem for {\em embedded} submanifolds of low codimension, and in particular for hypersurfaces,
has been solved only recently. It was Globevnik \cite{Globevnik2015AM} who proved that for every integer $n\ge 2$ the open unit ball $\b_n$ of $\c^n$ admits smooth complete closed complex submanifolds of any codimension $q\in\{1,\ldots,n-1\}$, thereby positively settling Yang's question for embeddings in arbitrary dimension and codimension. Globevnik's hypersurfaces $(q=1)$ are given implicitly; to be more precise, he constructed a holomorphic function $f\colon \b_n\to\c$ all of whose level sets, most of which are smooth by Sard's theorem, are complete. This shows that the ball $\b_n$ carries a (possibly singular) holomorphic foliation by complete closed complex hypersurfaces, namely, the one formed by the level sets $f^{-1}(c)$ $(c\in\c)$ of $f$. 

It is on the other hand well known that every divisor in a Stein manifold $X$ with the vanishing second cohomology group $H^2(X;\z)=0$ (as, for instance, the ball $\b_n$ of $\c^n$) is a principal divisor; in other words, every closed complex hypersurface in $X$ is a level set of a holomorphic function on $X$ (see Serre's paper \cite{Serre1953} from 1953 or Remmert  \cite[p.\ 98]{Remmert1998GTM}).
This follows from the fact that a second Cousin problem on a Stein manifold is solvable by holomorphic functions if it is solvable by continuous ones.
Much more recently Forstneri\v c \cite{Forstneric2003AM,Forstneric2018PAMS} proved that if the given hypersurface is smooth, then it admits a holomorphic defining function on $X$ which is {\em noncritical}, and hence all of its level sets are smooth closed complex hypersurfaces and they form a nonsingular holomorphic foliation of $X$.

Motivated by these seminal results, we prove in this paper that
\begin{quoting}[leftmargin={7mm}]
{\em if a closed complex hypersurface $V$ in the open unit ball $\b_n$ of $\c^n$ $(n\ge 2)$ is complete, then it is a level set of a holomorphic function $f\colon\b_n\to\c$ all of whose level sets $f^{-1}(c)$ $(c\in\c)$ are complete; if in addition $V$ is smooth, then the defining function $f$ can be chosen noncritical}.
\end{quoting}
This establishes a converse to the aforementioned Globevnik's existence theorem, namely, this proves that, actually, {\em every} complete closed complex hypersurface in $\b_n$ is defined by a holomorphic function on $\b_n$ as those constructed in \cite{Globevnik2015AM}. Moreover, our result shows that $\b_n$ admits a {\em nonsingular} holomorphic foliation by smooth complete closed complex hypersurfaces (thereby extending Globevnik's results in \cite{Globevnik2015AM}) and, what is the main point, that {\em every} complex hypersurface in $\b_n$ with these properties can be embedded into such a foliation.

We shall obtain the above result as a special case of a more general one in which completeness of the given hypersurface $V$ is not required (Theorem \ref{th:intro-q=1}); we then furnish a holomorphic defining function for $V$ on $\b_n$ all of whose level sets, except perhaps $V$ itself, are complete.
This provides extensions to the aforementioned results by Serre from \cite{Serre1953} and Forstneri\v c from \cite{Forstneric2003AM,Forstneric2018PAMS} in the case when the Stein manifold $X$ is a Euclidean ball.

Besides, we go considerably further and prove a similar result dealing also with complex submanifolds of higher codimension (Theorem \ref{th:intro-q}) and implying that for any $q\in\{1,\ldots,n-1\}$ there is a {\em nonsingular holomorphic submersion foliation} (i.e., a foliation formed by the fibres of a holomorphic submersion)  of $\b_n$ by smooth complete closed complex submanifolds of pure codimension $q$  
(Corollary \ref{co:k=0}). 



Here is a simplified version of the main result of this paper (Theorem \ref{th:MR}).
%
%
\begin{theorem}\label{th:intro-q}
Let $n$ and $q$ be integers with $1\le q<n$.
If $V$ is a smooth closed complex submanifold of pure codimension $q$ in $\b_n$ which is contained in a fibre of a holomorphic submersion from $\b_n$ to $\c^q$, then there is a holomorphic submersion $f\colon \b_n\to\c^q$ satisfying the following conditions.
\begin{itemize}
\item[\rm (i)]  $f(z)=0\in\c^q$ for all $z\in V$; hence, $V$ is a union of components of $f^{-1}(0)$.
\smallskip
\item[\rm (ii)] 
The fibre $f^{-1}(c)\subset\b_n$ is complete for every $c\in f(\b_n)\setminus\{0\}\subset\c^q$.
\smallskip
\item[\rm (iii)] $f^{-1}(0)\setminus V$ is either the empty set or a smooth complete closed complex submanifold of pure codimension $q$ in $\b_n$.
\end{itemize}
Furthermore, if $V$ is a fibre of a holomorphic submersion from $\b_n$ to $\c^q$, then $f$ can be chosen with $f(z)\neq 0$ for all  $z\in\b_n\setminus V$, and hence $f^{-1}(0)=V$.
\end{theorem}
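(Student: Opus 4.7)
The plan is to adapt Globevnik's iterative wall-building scheme from \cite{Globevnik2015AM} to the codimension $q$ submersion setting, carrying the vanishing on $V$ throughout the induction. Starting from the given submersion $F_0 \colon \b_n \to \c^q$ with $V \subseteq F_0^{-1}(0)$, I would fix an exhaustion $K_1 \Subset K_2 \Subset \cdots$ of $\b_n$ by closed $\Oscr(\b_n)$-convex balls and construct holomorphic submersions $F_k \colon \b_n \to \c^q$ satisfying (a) $F_k|_V \equiv 0$; (b) $\|F_k-F_{k-1}\|_{K_{k-1}} < 2^{-k}$; and (c) every smooth arc in any fibre $F_k^{-1}(c)$, with $c$ ranging over an increasing exhaustion of $\c^q$, that leaves $K_{k-1}$ and enters $\b_n \setminus K_k$ has Euclidean length at least $1$. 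The limit $f := \lim_k F_k$ then satisfies (i)--(iii): (i) from (a), (ii) from summing (c) along a divergent path, and (iii) from the Hurwitz-type stability of zero sets of submersions under small perturbations.

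The key technical step is a relative wall-building lemma: given a holomorphic submersion $F \colon \b_n \to \c^q$ vanishing on $V$, a compact $\Oscr(\b_n)$-convex $K \Subset \b_n$, and parameters $\epsilon, R > 0$, there is a holomorphic $H \colon \b_n \to \c^q$ with $H|_V \equiv 0$ and $\|H\|_K < \epsilon$ such that $\tilde F := F + H$ is a submersion and any arc in $\tilde F^{-1}(c)$, $c \in R \overline\d^q$, from $K$ to $\b_n \setminus L$ (for a slightly larger compact $L$) has Euclidean length at least $1$. I would build $H$ by threading thin holomorphic labyrinths through the shell $L \setminus K$ in the style of Globevnik, taking care that the labyrinths are placed inside $L \setminus (K \cup V)$, which is possible because $V$ has complex codimension $q \geq 1$ while the shell has real dimension $2n$. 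Then Cartan's theorem B applied to the coherent ideal sheaf $\Iscr_V^{\oplus q}$ on the Stein manifold $\b_n$, combined with a Mergelyan-type approximation on $\Oscr(\b_n)$-convex compacta, realizes the local wall-making perturbation as a global holomorphic $H$ that vanishes identically on $V$. Preservation of the submersion property is automatic on $K$ from the smallness of $H$, and on $\b_n \setminus K$ is arranged by invoking Forstneri\v c's noncritical holomorphic map theorems from \cite{Forstneric2003AM,Forstneric2018PAMS} to perturb away the critical locus of the Jacobian without destroying the walls.

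For the final assertion, if $V$ is a whole fibre of an initial submersion $F_0$, so that $F_0^{-1}(0) = V$ and $F_0$ is nonvanishing on $\b_n \setminus V$, then by choosing each $H_k$ small compared to $|F_{k-1}|$ on nested neighborhoods of $K_k \setminus V$, Hurwitz's theorem forces the limit $f$ to have no zeros outside $V$, giving $f^{-1}(0) = V$. The main obstacle will be coordinating the three simultaneous constraints on $H$ in the wall-building lemma: vanishing on $V$, smallness on $K$, and effective wall-building outside $L$. The first two are compatible via ideal-sheaf machinery on the Stein ball; the third forces one to engineer Globevnik-style labyrinths that thread through $L \setminus (K \cup V)$, and then to patch the local wall-making perturbation with the required vanishing on $V$ in a way that retains both the approximation estimate on $K$ and the submersion property globally. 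The extension from the codimension one case of \cite{Globevnik2015AM} to arbitrary $q \in \{1,\ldots,n-1\}$ relies essentially on the submersion-preserving perturbation and extension theorems of Forstneri\v c for $q$-tuples of holomorphic functions on Stein manifolds.
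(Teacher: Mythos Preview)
Your overall architecture (inductive scheme, limit submersion, Forstneri\v c's extension theorems for $q$-coframes) matches the paper, but the core wall-building step has a genuine gap.

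You propose to place the labyrinth inside $L\setminus(K\cup V)$, i.e.\ to make it disjoint from $V$, and then to achieve condition (c) for all $c\in R\overline\d^q$. This cannot work. The defining property of a tangent labyrinth is purely geometric: \emph{every} path crossing the shell and missing the labyrinth must be long. If the labyrinth avoids the closed set $V$, then there is an open tube around $V\cap(\text{shell})$ disjoint from the labyrinth, and radial paths inside this tube cross the shell with length $\approx R_j-r_j$. Hence no labyrinth disjoint from $V$ can have the required blocking property. In particular your condition (c) fails already for $c=0$ (since $V\subset F_k^{-1}(0)$ and $V$ is not assumed complete), and more importantly for all sufficiently small nonzero $c$, because $F_k^{-1}(c)$ lies in that same tube near $V$ and thus crosses the shell shortly. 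So precisely the fibres you most need to control --- those over points near $0$ --- escape your walls.

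The paper's fix is to \emph{let the labyrinth meet $V$} and to replace your one-sided constraint $|c|\le R$ by a two-sided one $\lambda\le |f|\le 1/\lambda$. One takes the labyrinth $L_j$ with components of arbitrarily small diameter (Lemma~\ref{lem:labyrinth}), splits it as $L_j=\Lambda_V\cup\Lambda_0$ according to whether a component meets $V$ or not, and then arranges that $|f_j|<\lambda_j$ on $\Lambda_V$ (automatic, since each such component lies in a tiny neighborhood of $V$ where $f_{j-1}\approx 0$) while $|f_j|>1/\lambda_j$ on $\Lambda_0$ (by adding a large constant there, using polynomial convexity of $r_j\overline\b_n\cup L_j$ and Forstneri\v c's submersion approximation theorem with a suitable $q$-coframe). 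Then any path on which $\lambda\le|f|\le 1/\lambda$ avoids all of $L_j$, hence is long. This two-sided mechanism, together with $\lambda_j\to 0$, is what yields completeness of every nonzero fibre; your one-sided scheme cannot separate the zero fibre (which may be incomplete) from its nearby fibres.
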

In particular, the family of components of the fibres $f^{-1}(c)$ $(c\in \c^q)$ of the submersion $f$ furnished by the theorem is a nonsingular holomorphic submersion foliation of $\b_n$ by smooth connected closed complex submanifolds of codimension $q$ all which, except perhaps those contained in the given submanifold $V\subset f^{-1}(0)$, are complete. If $V$ is complete, then all the leaves in the foliation are complete.

%
%
There are, however, several constructions of smooth
complete closed complex submanifolds in balls, besides the implicit one.  
Jones' examples in \cite{Jones1979PAMS} were obtained by using the BMO duality technique. The first known complete properly embedded complex curves in $\b_2$, due to Alarc\'on and L\'opez \cite{AlarconLopez2016JEMS}, were found as holomorphic curves parameterized by open Riemann surfaces by means of a desingularizing argument involving a surgery which does not enable any control on the topology of the examples (this settled the embedded Yang problem for $n=2$). 
Moreover, using holomorphic automorphisms of $\c^n$ (see Anders\'en and Lempert \cite{AndersenLempert1992IM} and Forstneri\v c and Rosay \cite{ForstnericRosay1993IM}), Alarc\'on, Globevnik, and L\'opez \cite{AlarconGlobevnikLopez2019Crelle} constructed hypersurfaces of this type in $\b_n$ with restricted topology, and Alarc\'on and Globevnik \cite{AlarconGlobevnik2017C2} with arbitrary topology when $n=2$.
%
%
Some other construction techniques for submanifolds of high codimension, enabling control even on the complex structure of the examples, can be found in
Alarc\'on and Forstneri\v c \cite{AlarconForstneric2013MA} 
(based on the Riemann-Hilbert boundary value problem) and
 Drinovec Drnov\v sek \cite{Drinovec2015JMAA} (using holomorphic peak functions).
 We refer to \cite[\textsection 4.3]{Forstneric2018Survey} for a survey of results in this topic. 

Theorem \ref{th:intro-q} furnishes a holomorphic submersion from $\b_n$ to $\c^q$ having a complete fibre. 
Applying Theorem \ref{th:intro-q} with $V$ contained in such a fibre gives the following.
%
%
\begin{corollary}\label{co:k=0}
For any pair of integers $n$ and $q$ with $1\le q<n$ there is a holomorphic submersion from $\b_n$ to $\c^q$ all of whose fibres are complete. Hence, there is a nonsingular holomorphic submersion foliation of $\b_n$ by smooth complete closed complex submanifolds of pure codimension $q$.
\end{corollary}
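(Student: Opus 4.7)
The plan is to bootstrap Theorem \ref{th:intro-q} by applying it twice: the first application produces a complete submanifold that happens to be a fibre of a holomorphic submersion, and the second application, feeding this submanifold into the ``Furthermore'' clause, yields a submersion \emph{all} of whose fibres are complete.

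For the first pass, I would take as initial data the zero fibre $V_0=\{0\}^q\times\b_{n-q}$ of the linear coordinate projection $\pi\colon\b_n\to\c^q$, $\pi(z_1,\ldots,z_n)=(z_1,\ldots,z_q)$. This is a smooth closed complex submanifold of pure codimension $q$ in $\b_n$ and is by construction a fibre of a holomorphic submersion $\b_n\to\c^q$, so Theorem \ref{th:intro-q} applies and delivers a holomorphic submersion $g\colon\b_n\to\c^q$ whose fibres $g^{-1}(c)$ are complete for every $c\in g(\b_n)\setminus\{0\}$. Because $g$ is a submersion, its image $g(\b_n)\subset\c^q$ is open, so I can select some $c_0\in g(\b_n)$ with $c_0\neq 0$ and set $V_1:=g^{-1}(c_0)$. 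By construction, $V_1$ is a nonempty smooth complete closed complex submanifold of pure codimension $q$ in $\b_n$ which is itself a fibre of the holomorphic submersion $g-c_0\colon\b_n\to\c^q$.

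For the second pass, I would apply Theorem \ref{th:intro-q} to $V=V_1$ and invoke the ``Furthermore'' clause: the resulting submersion $f\colon\b_n\to\c^q$ then satisfies $f^{-1}(0)=V_1$, which is complete, while item (ii) of the theorem guarantees that $f^{-1}(c)$ is complete for every $c\in f(\b_n)\setminus\{0\}$ as well. Hence every fibre of $f$ is complete, which is the first assertion of the corollary. The second assertion follows immediately, since the decomposition of $\b_n$ into connected components of the fibres of a holomorphic submersion is automatically a nonsingular holomorphic submersion foliation by smooth closed complex submanifolds of pure codimension $q$.

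The entire analytic content is packed into Theorem \ref{th:intro-q}; no new construction, approximation, or gluing is needed, and there is therefore no substantial obstacle beyond sequencing the two applications correctly. The only fussy point to record is the existence of a nonzero value $c_0\in g(\b_n)$, which is immediate from the openness of the image of a holomorphic submersion on a nonempty open set.
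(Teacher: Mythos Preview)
Your proposal is correct and follows essentially the same two-step bootstrap that the paper indicates just before the corollary: first apply Theorem \ref{th:intro-q} to any admissible $V$ (you choose the linear slice $\{0\}^q\times\b_{n-q}$, the paper leaves this implicit) to obtain a submersion with a complete fibre, then feed that complete fibre back into Theorem \ref{th:intro-q} to obtain a submersion all of whose fibres are complete. Your write-up is in fact more detailed than the paper's one-line justification, and your use of the ``Furthermore'' clause in the second pass is a clean way to conclude (though item {\rm (iii)} alone would also suffice, since $f^{-1}(0)\setminus V_1$ is complete or empty and $V_1$ is complete).
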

In the special case when $q=1$, Corollary \ref{co:k=0} was proved in \cite{Globevnik2015AM} but without ensuring that the defining function of the foliation be submersive (i.e., noncritical); thus, the foliation of $\b_n$  by complex hypersurfaces may have singularities. No result in this direction is available in the literature when $q>1$ (even allowing singularities). 

Somewhat surprisingly and
contrary to the intuition that motivated this paper, it turns out that the
completeness of $V$ is not required in Theorem \ref{th:intro-q}; the connectedness of $V$ is not required, either. Thus, we also obtain the following.
%
%
\begin{corollary}\label{co:k}
For any triple of integers $n$, $q$, and $k$ with $1\le q<n$ and $k\ge 1$ there is a nonsingular holomorphic submersion foliation of $\b_n$ by smooth connected closed complex submanifolds of codimension $q$ all which, except precisely $k$ among them, are complete. 
\end{corollary}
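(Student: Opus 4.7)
The plan is to reduce Corollary \ref{co:k} to the ``fibre'' version of Theorem \ref{th:intro-q} by exhibiting a smooth closed complex submanifold $V\subset\b_n$ of pure codimension $q$ whose connected components are exactly $k$ disjoint non-complete pieces and which is itself a fibre of some holomorphic submersion $\b_n\to\c^q$. Theorem \ref{th:intro-q} then provides a submersion $f\colon\b_n\to\c^q$ with $f^{-1}(0)=V$ and every other fibre complete, and the foliation of $\b_n$ by the connected components of the fibres of $f$ has precisely $k$ non-complete leaves.

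Concretely, I would fix $k$ distinct points $\lambda_1,\ldots,\lambda_k\in\D$ and set
\[
V_i=\{z\in\b_n:z_{n-q+1}=\lambda_i,\ z_{n-q+2}=\cdots=z_n=0\},\qquad V:=V_1\cup\cdots\cup V_k.
\]
Each $V_i$ is biholomorphic to a Euclidean $(n-q)$-ball, hence a smooth connected non-complete closed complex submanifold of pure codimension $q$ in $\b_n$, and the $V_i$ are pairwise disjoint. Next, invoke the one-dimensional case of Forstneri\v c's theorem from \cite{Forstneric2003AM,Forstneric2018PAMS} on the Stein manifold $\D$ (with $H^2(\D;\z)=0$) to obtain a holomorphic function $\phi\colon\D\to\c$ satisfying $\phi^{-1}(0)=\{\lambda_1,\ldots,\lambda_k\}$ and $\phi'$ nowhere zero on $\D$. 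Define
\[
g(z_1,\ldots,z_n)=\bigl(\phi(z_{n-q+1}),\,z_{n-q+2},\,\ldots,\,z_n\bigr).
\]
Then $g\colon\b_n\to\c^q$ is holomorphic, its differential in the last $q$ coordinates is diagonal with entries $\phi'(z_{n-q+1}),1,\ldots,1$, all nonzero on $\b_n$, so $g$ is a submersion there; and $g^{-1}(0)=V$. Hence $V$ satisfies the ``fibre'' hypothesis of Theorem \ref{th:intro-q}.

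The final assertion of Theorem \ref{th:intro-q}, applied to this $V$, now produces a holomorphic submersion $f\colon\b_n\to\c^q$ with $f^{-1}(0)=V$ and $f^{-1}(c)$ complete for every $c\in f(\b_n)\setminus\{0\}$. Since each $V_i$ has the same complex dimension as $f^{-1}(0)$, it is both open and closed in $f^{-1}(0)$, so $V_1,\ldots,V_k$ are precisely the connected components of $f^{-1}(0)$ and contribute $k$ non-complete leaves to the foliation by connected components of fibres of $f$; every component of any $f^{-1}(c)$ with $c\neq 0$ is complete because $f^{-1}(c)$ itself is. The delicate point of the argument is the construction of $g$: for $q\ge 2$ a union of several disjoint affine codimension-$q$ slices of $\b_n$ is in general not a global complete intersection (the ideal of several points in $\c^q$ seldom is), so one cannot simply take a product of linear defining forms; the trick is to align the centres of the $V_i$ on a single complex line in $\c^q$ so that $g$ splits as the identity on $q-1$ coordinates and a one-dimensional noncritical function, reducing the higher-codimension submersion issue to the one-dimensional case already settled by Forstneri\v c.
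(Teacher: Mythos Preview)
Your argument is correct and is exactly the approach the paper indicates (the paper only remarks, after the statement, that Theorem \ref{th:intro-q} applied to any $V$ with $k$ non-complete components lying in a fibre of a submersion $\b_n\to\c^q$ yields the corollary; you supply the explicit $V$ and $g$ the paper leaves implicit). One cosmetic point: the inference ``biholomorphic to a Euclidean ball, hence non-complete'' is not valid as written---complete bounded balls are precisely what Yang's problem is about---but your $V_i$ are \emph{affine} slices, hence isometric (not merely biholomorphic) to Euclidean balls, and the radial path shows non-completeness.
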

In fact, Theorem \ref{th:intro-q} implies the more general assertion that every smooth closed complex submanifold $V$ of pure codimension $q$ in $\b_n$ consisting of precisely $k$ components, none of which is complete, can be embedded into a foliation of $\b_n$ as those in the corollary whenever $V$ lies in a fibre of a holomorphic submersion from $\b_n$  to $\c^q$. Corollary \ref{co:k} is the first result of its kind (even for $q=1$). 

The assumption in Theorem \ref{th:intro-q} that $V$ lies in a fibre of a holomorphic submersion from $\b_n$ to $\c^q$ is clearly necessary in view of property {\rm (i)} and cannot be relaxed, hence, the theorem applies to all submanifolds of the ball for which its conclusion makes sense. Note that this condition is equivalent to $V$ be a union of connected components of a fibre of such a submersion. Likewise, the assumption on $V$ in the final assertion of Theorem \ref{th:intro-q} is also necessary. Nevertheless, 
%
 the reader may be wondering when is a given smooth closed complex submanifold of pure codimension $q$ in $\b_n$ contained in a fibre of a holomorphic submersion from $\b_n$ to $\c^q$ and, more ambitiously, when is it defined by such a submersion. 
These questions, in general for an arbitrary Stein manifold in place of the ball, pertain to the classical subject of {\em complete intersections} (see e.g.\ Forster \cite{Forster1984LNM} and Schneider \cite{Schneider1982MA} for background); we refer to Forstneri\v c \cite{Forstneric2003AM,Forstneric2018PAMS} and \cite[\textsection 8.5 and \textsection 9.12-9.16]{Forstneric2017} for a discussion of the state of the art. The latter holds true, for instance, provided that $q=1$ (see \cite[Corollary 1.2]{Forstneric2018PAMS}), and so Theorem \ref{th:intro-q} applies to any smooth closed complex hypersurface. 
%
Moreover, every closed complex hypersurface $V$  (possibly with singularities) in $\b_n$ is known to admit a defining holomorphic function on $\b_n$ which is noncritical off the singular set of $V$
 (this holds true on any Stein manifold $X$ with $H^2(X;\Z)=0$; see \cite[Theorem 1.1]{Forstneric2018PAMS}). 
The following extension of the case $q=1$ in Theorem \ref{th:intro-q}  is a simplified version of the  second main result of this paper (Theorem \ref{th:MR-q=1}).
%
%
\begin{theorem}\label{th:intro-q=1}
For any closed complex hypersurface $V$ (possibly with singularities) in $\b_n$ $(n\ge 2)$ there is a holomorphic function $f$ on $\b_n$ with the following properties.
\begin{itemize}
\item[\rm (i)]  $f^{-1}(0)=V$; i.e., $f$ is a defining function for $V$. 
\smallskip
\item[\rm (ii)] The critical locus of $f$ coincides with the singular set of $V$. In particular, the function $f$ is noncritical off $V$.
\smallskip
\item[\rm (iii)] The level set $f^{-1}(c)\subset\b_n$ is complete for every $c\in f(\b_n)\setminus\{0\}\subset\c$.
\end{itemize}
\end{theorem}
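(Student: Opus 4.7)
The plan is to start from a defining function provided by Forstneri\v c and to modify it by a multiplicative exponential factor, iteratively chosen so as to force completeness of the non-zero level sets via a Globevnik-type labyrinth construction, while the multiplicative exponential structure automatically preserves the zero set and the critical locus at every step.

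By \cite[Theorem 1.1]{Forstneric2018PAMS}, there is a holomorphic function $g\colon \b_n\to\c$ with $g^{-1}(0)=V$ whose critical locus equals $\mathrm{Sing}(V)$; this $g$ already satisfies (i) and (ii). I would seek the desired $f$ in the form $f = g\,e^{h}$ for some $h\in\Ocal(\b_n)$ to be constructed. The factor $e^{h}$ is nowhere zero, so $f^{-1}(0)=V$, and $df = e^{h}(dg + g\,dh)$ equals $e^{h}\,dg$ on $V$, so the critical locus of $f$ on $V$ coincides with that of $g$, namely $\mathrm{Sing}(V)$. Hence (i) and the part of (ii) on $V$ hold for \emph{any} choice of $h$; the two remaining requirements are that $dg + g\,dh$ never vanish on $\b_n\setminus V$ and that every non-zero level set of $f$ be complete.

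The function $h$ is obtained as a uniform limit on compacta of a sequence $h_k\in\Ocal(\b_n)$, with $h_0=0$, along exhaustions $K_1\Subset K_2\Subset\cdots$ by $\Ocal(\b_n)$-convex compacta and radii $r_k\nearrow 1$ with $K_k\subset\{\|z\|\le r_k\}$. At step $k$, assuming that $f_k:=g\,e^{h_k}$ is noncritical on $K_k\setminus V$ and that every piecewise smooth arc in any non-zero level set of $f_k$ joining $K_k$ to $\{\|z\|=r_k\}$ has Euclidean length at least $k$, I would construct $h_{k+1}-h_k\in\Ocal(\b_n)$ by combining Runge--Oka approximation on the Stein manifold $\b_n$ with the Globevnik labyrinth construction of \cite{Globevnik2015AM}: the increment must be small enough on $K_k$ (in the $C^1$ norm) that $f_{k+1}$ remains noncritical on $K_k\setminus V$ and the existing length estimate on $K_k$ is preserved, and yet large enough on a labyrinth of real hypersurfaces in the shell $r_k\le\|z\|\le r_{k+1}$ to force every arc in a non-zero level set of $f_{k+1}$ that traverses this shell to have Euclidean length at least $1$. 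Passing to the limit gives $f=g\,e^{h}$ with all the required properties.

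The principal obstacle is to reconcile the opposing demands on $h_{k+1}-h_k$: it must be small enough on $K_k$ to preserve noncriticality off $V$ and the already achieved length bounds, and yet drive sufficient oscillation of $e^{h_{k+1}}$ across the labyrinth shell to stretch every non-zero fibre passing through it. The multiplicative exponential ansatz is the key conceptual simplification because it decouples conditions (i) and (ii) on $V$, which become automatic, from condition (iii), reducing the problem essentially to a Globevnik-style labyrinth construction together with a critical-locus preservation estimate that uses only the openness of the set $\{df_k\ne 0\}$ in $\b_n\setminus V$.
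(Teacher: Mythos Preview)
Your multiplicative ansatz $f=g\,e^h$ is a neat way to freeze the zero set and the critical locus \emph{on} $V$, but the proposal has two genuine gaps.

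\textbf{The main gap: the labyrinth meets $V$.} Your inductive hypothesis, that \emph{every} nonzero level set of $f_k$ already has long arcs from $K_k$ to $\{\|z\|=r_k\}$, cannot hold. The hypersurface $V=f_k^{-1}(0)$ passes through every spherical shell, and along $V$ there are arcs of bounded Euclidean length crossing the shell; for $|c|$ small enough the level set $f_k^{-1}(c)$ lies close to $V$ and inherits similarly short crossings. Equivalently, in any Globevnik-type labyrinth placed in a shell, some components will intersect $V$; on those components $|f_k|=|g|\,|e^{h_k}|$ takes the value $0$ (on $V$) and hence, by continuity, every small positive value, so the nearby nonzero fibres cannot be forced to avoid them by pushing $|e^{h_k}|$ up. This is exactly the new difficulty the paper isolates: one must split the labyrinth $L_j$ into $\Lambda_V$ (components meeting $V$, chosen with diameter so small that $|f_j|<\lambda_j$ there) and $\Lambda_0$ (components disjoint from $V$, on which one arranges $|f_j|>1/\lambda_j$), and correspondingly weaken the inductive statement to: paths with $\lambda\le|f|\le 1/\lambda$ crossing shell $j\ge j_\lambda$ are long. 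A fixed $c\neq 0$ then avoids $L_j$ only for $j$ large, which is still enough for completeness. Your scheme can be adapted to this split (keep $h_{k+1}-h_k$ small near $\Lambda_V$, large real part on $\Lambda_0$), but as written the induction hypothesis is false and the proposal does not mention this dichotomy, which is the crux of the argument.

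\textbf{A secondary gap: noncriticality off $V$ in the new shell.} Openness of $\{df_k\neq 0\}$ lets you preserve noncriticality on the old compact $K_k$ under small $C^1$ perturbations, but it says nothing about $K_{k+1}\setminus K_k$. After Runge approximation for $h_{k+1}-h_k$, the differential $dg+g\,dh_{k+1}$ is uncontrolled there and may well vanish. The paper avoids this by invoking Forstneri\v c's Oka--Weil--Cartan theorem for functions with prescribed critical locus (its Theorem~\ref{th:Divisor-1}), which produces at each step a \emph{global} function on $\b_n$ with $f_j^{-1}(0)=V$ and $\mathrm{Crit}(f_j)=V_{\mathrm{sing}}$, approximating the previous one on $r_j\overline\b_n\cup L_j$. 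If you want to keep the multiplicative ansatz you must either invoke a comparable tool or supply a separate argument (e.g.\ a genericity perturbation compatible with the labyrinth estimates) for noncriticality in each new shell; ``openness'' alone does not do this.
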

It therefore turns out that {\em every} closed complex hypersurface $V\subset\b_n$ is a union of leaves in a holomorphic foliation of $\b_n$ by connected closed complex hypersurfaces (the family of components of the level sets of the function $f$ furnished by the theorem) all which, except perhaps those contained in $V=f^{-1}(0)$, are smooth and complete. If $V$ is smooth, then $f$ is noncritical and thus the foliation is nonsingular. 
If $V$ is complete, then all the leaves in the foliation are complete. 

The hypersurface $V$ in Theorem \ref{th:intro-q=1} is not assumed to be complete, connected, or smooth; the latter is the new feature with respect to Theorem \ref{th:intro-q}. On the other hand, comparing Theorem \ref{th:intro-q=1} with the aforementioned \cite[Theorem 1.1]{Forstneric2018PAMS}, what is new is condition {\rm (iii)} guaranteeing completeness of all level sets of $f\colon\b_n\to\c$, except perhaps $f^{-1}(0)=V$, and thereby ensuring that every {\em complete} closed complex hypersurface in $\b_n$ (hence, in particular, those furnished in \cite{AlarconLopez2016JEMS,AlarconGlobevnikLopez2019Crelle,AlarconGlobevnik2017C2}) is defined by a holomorphic function on $\b_n$ whose level sets are all complete; i.e., as those constructed by Globevnik in \cite{Globevnik2015AM}. Thus, as suggested in the title, every hypersurface of this type (including those with singularities) comes as a union of leaves in a holomorphic foliation of $\b_n$ by hypersurfaces of the same sort; moreover, the smooth ones come in nonsingular holomorphic foliations.

%
%
Together with the techniques in \cite{Globevnik2016MA} (a sequel to \cite{Globevnik2015AM} where Globevnik extended results to pseudoconvex domains), 
the new methods we develop in this paper enable us to obtain an analogue of Theorem \ref{th:intro-q=1} in which the role of the ball is played by an arbitrary Stein manifold equipped with a Riemannian metric. This applies in particular to any pseudoconvex domain in $\C^n$, thereby establishing a converse to Globevnik's result from \cite{Globevnik2016MA}. In Section \ref{sec:pseudo} we motivate, state, and prove this result.


%
%
\noindent{\bf Method of proof.}
Theorem \ref{th:intro-q} is proved in Section \ref{sec:proof}. The proof broadly follows the usual approach in this type of constructions; however, our argument presents some major differences and novelties. We shall provide a holomorphic submersion $f\colon \b_n\to\c^q$ satisfying the conclusion of the theorem as the limit of a sequence of holomorphic submersions $f_j\colon \b_n\to\c^q$ $(j\in\n)$. In order to guarantee that all the fibres of $f$ are complete, besides the one over $0\in\c^q$, we shall ensure that each of them is disjoint from a certain {\em labyrinth} of compact sets in $\b_n$. Several types of labyrinths have been used as a key ingredient in earlier constructions of this type (see \cite{Globevnik2015AM,Globevnik2016MA,AlarconGlobevnikLopez2019Crelle}); nevertheless, the way we build and use them in this paper is conceptually different from the one of the previous constructions (see Subsec.\ \ref{ss:l}). In the available sources one first obtains an infinite labyrinth in $\b_n$ such that every proper path $[0,1)\to\b_n$ avoiding the labyrinth has infinite length, and after that one constructs a closed complex submanifold in $\b_n$ which is disjoint from this particular labyrinth, hence, it is complete. This strategy does not seem to lead to our goal; on the contrary, our proof requires us to construct the labyrinth and the submersion $f\colon \b_n\to\c^q$ at the same time in an inductive procedure (see Lemma \ref{lem:ML}). Moreover, our labyrinth is adapted to the given submanifold, $V$, and those components of the labyrinth which intersect it need to be treated in a completely different manner 
than those which do not. In particular, it is crucial in our method that the components of the labyrinth that meet $V$ may be chosen with arbitrarily small diameter (the diameter of the rest of the components is, however, irrelevant for us). 
Despite being natural, this novel approach implies new important difficulties 
to be overcome. 

Very roughly, at each step of the induction we construct a holomorphic submersion $f_j\colon \b_n\to\c^q$, vanishing everywhere on $V$, and a labyrinth $L_j$, consisting of finitely many compact sets in a spherical shell $R_j\b_n\setminus r_j\overline\b_n\subset\c^n$ $(0<r_j<R_j<1$, $\lim_{j\to\infty}r_j=1)$, in such a way that more and more fibres of the limit submersion $f$ avoid the infinite labyrinth $\bigcup_{i\ge j} L_i$ as $j\to\infty$. For that, we ensure that $|f|$ takes big values on the components of $L_j$ which are disjoint from $V$ and small ones on those which are not. At the same time we guarantee that $f^{-1}(0)\setminus V$, if nonempty, does not intersect any $L_j$. The labyrinth $L_j$ depends on both the submersion $f_{j-1}$ obtained in the previous step of the induction and the given submanifold $V$. 
In order to carry out this program we use in a strong way that the union of the ball $r_j\overline\b_n$  and our finite labyrinth $L_j$ is a {\em polynomially convex} compact set in $\c^n$ (see Remark \ref{rem:pol}).

The main tool in the proof is a rather sophisticated Oka-Weil-Cartan type theorem for holomorphic {\em submersions} from a Stein manifold $X$ to $\c^q$ $(1\le q<\dim X)$ which was furnished by Forstneri\v c in \cite{Forstneric2003AM,Forstneric2018PAMS} (see also \cite[\textsection 9.12-9.16]{Forstneric2017}). This is another substantial difference with respect to earlier constructions since, thus far, only the standard Runge approximation for functions had been involved in the analysis of this subject (see \cite{AlarconLopez2016JEMS,Globevnik2015AM,Globevnik2016MA,AlarconGlobevnikLopez2019Crelle,Globevnik2016JMAA,AlarconGlobevnik2017C2}). If we are given a smooth closed complex submanifold $V\subset X$ which is defined as the zero fibre of a holomorphic submersion $h\colon U\to\c^q$ on a neighborhood $U$ of $V$ in $X$, then the mentioned result
enables us to approximate $h$ uniformly on holomorphically convex compact subsets by holomorphic submersions $\wt h\colon X\to\c^q$ with $\wt h(x)= 0\in\c^q$ for all $x\in V$, provided that there is a {\em $q$-coframe} on $X$ 
agreeing with the differential $dh$ on $U$ (see Subsec.\ \ref{ss:COW} for terminology and a precise statement). An additional difficulty in the proof of Theorem \ref{th:intro-q} is, therefore, the need of having  at hand suitable $q$-coframes on the ball $\b_n$ at each step in the construction procedure (see Claim \ref{cl:ohi}).

The proof of Theorem \ref{th:intro-q=1}, which is similar to the one of Theorem \ref{th:intro-q}, is sketched in Section \ref{sec:q=1}. 


\section{Preliminaries}\label{sec:prelim}

We denote by $|\cdot|$, $\dist(\cdot,\cdot)$, $\length(\cdot)$, and $\diam(\cdot)$ the Euclidean norm, distance, length, and diameter in $\r^n$ for any $n\in\n=\{1,2,3,\ldots\}$; also $\z_+=\n\cup\{0\}$. Let $\b_n=\{z\in\c^n\colon |z|<1\}$ denote the open unit ball in $\c^n$ for $n\ge 2$.  

As it is customary, given a set $A$ in a topological space $X$ we denote by $\overline A$ and $\mathring A$ the topological closure and interior of $A$ in $X$, respectively, and $bA=\overline A\setminus \mathring A$. If $B\subset X$, then we write $A\Subset B$ when $\overline A\subset\mathring B$. Assume that $X$ is a Stein manifold. We shall say that a function $A\to\c$ is {\em holomorphic} if it is holomorphic in an unspecified open set in $X$ containing $A$; we denote by $\Ocal(A)$ the algebra of all such functions. A compact set $A\subset X$ is said to be {\em holomorphically convex} in $X$ (or, shortly, {\em $\Ocal(X)$-convex}) if for each $x\in X\setminus A$ there is $f\in \Ocal(X)$ with $|f(x)|>\max\{|f(a)|\colon a\in A\}$; if this is the case, then the Oka-Weil theorem (see \cite[Theorem 2.3.1]{Forstneric2017}; see \cite[Theorem 1.5.1]{Stout2007PM} for the $X=\c^n$) ensures that every function in $\Ocal(A)$ can be approximated uniformly on $A$ by functions in $\Ocal(X)$. Holomorphically convex compact sets in $\c^n$ are called {\em polynomially convex}; we refer to Stout \cite{Stout2007PM} for a monograph on the subject. 

A closed complex subvariety (i.e., embedded, possibly with singularities) $V$ in a domain $\Omega\subset\c^n$ is said to be {\em complete} if every proper (divergent) path $\gamma \colon [0,1)\to\Omega$ with $\gamma([0,1))\subset V$ has infinite Euclidean length. 
Completeness of $V$ is equivalent to the (possibly singular) Riemannian metric $\ggot$ induced on $V$ by the Euclidean one in $\c^n$ be complete in the classical sense; i.e., every component of $(V,\ggot)$ is a {complete metric space} meaning that Cauchy sequences are convergent. If $V$ is smooth and complete then, by the Hopf-Rinow theorem, it is also {geodesically complete}. Completeness of a closed complex subvariety in a Stein manifold equipped with a Riemannian metric is defined in the same way. We refer, for instance, to do Carmo \cite{doCarmo1992} for an introduction to Riemannian geometry.

%
%
\subsection{An Oka-Weil-Cartan theorem for holomorphic submersions}\label{ss:COW}

Let $X$ be a Stein manifold. A holomorphic map $f=(f_1,\ldots,f_q)\colon X\to\c^q$ is said to be {\em submersive} at a point $x\in X$ if its differential $df_x\colon T_xX\to T_{f(x)}\c^q\cong \c^q$ is surjective; equivalently, if the differentials of the component functions of $f$ at $x$ are linearly independent: 
$
	(df_1\wedge\cdots\wedge df_q)|_x\neq 0.
$ 
 The map $f$ is said to be a {\em submersion} if it is submersive at every point of $X$. If $f\colon X\to\c^q$ is a holomorphic submersion then the fibres $f^{-1}(c)$ $(c\in\c^q)$ of $f$ form a nonsingular holomorphic foliation of $X$ by smooth closed complex submanifolds of pure codimension $q$.

Following the terminology in \cite[p.\ 148]{Forstneric2003AM}, a {\em $q$-coframe} on $X$ is a $q$-tuple of continuous differential $(1,0)$-forms $\theta=(\theta_1,\ldots,\theta_q)$ on $X$ which are pointwise linearly independent at every point of $X$.
It is clear that if $h=(h_1,\ldots,h_q)\colon X\to\c^q$ is a holomorphic submersion, then the differential $dh=(dh_1,\ldots,dh_q)$ is a $q$-coframe on $X$; in this case $\theta=dh$ is said to be exact holomorphic. In the opposite direction, if $X$ admits a $q$-coframe for some integer $q$ with  $1\le q<\dim X$, then $X$ carries also a holomorphic submersion to $\c^q$ (see  \cite[Theorem 2.5]{Forstneric2003AM}).
A compilation of results by Forstneri\v c from \cite{Forstneric2003AM,Forstneric2018PAMS} (see also \cite[\textsection 9.13]{Forstneric2017}), together with inspections of their proofs, gives the following Oka-Weil-Cartan type theorem which will be exploited in the proof of Theorem \ref{th:intro-q}; we state it here for later reference.

\begin{theorem}[Forstneri\v c-Oka-Weil-Cartan theorem for holomorphic submersions]
\label{th:Divisor}
Let $X$ be a Stein manifold, $K\subset X$ be an $\Ocal(X)$-convex compact set, and $V\subset X$ be a closed complex subvariety. Also let $q\in\{1,\ldots,\dim X-1\}$ and assume that there is a holomorphic submersion $h\colon U\to\c^q$  on a neighborhood $U$ of $K\cup V$ in $X$. If there is a $q$-coframe $\theta$ on $X$ such that $\theta|_U=dh$, then for any  $\epsilon>0$ and $s\in\n$ there is a holomorphic submersion $\wt h\colon X\to\c^q$ such that $|\wt h(x)-h(x)|<\epsilon$ for all $x\in K$ and $\wt h$ agrees with $h$ to order $s$ on $V$.

Furthermore, if in addition $h|_V\equiv 0$ and $h$ admits a continuous extension to $X$ with $h(x)\neq 0$ for all $x\in X\setminus V$, then $\wt h$ can be chosen with $\wt h^{-1}(0)=V$.
\end{theorem}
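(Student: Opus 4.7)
The plan is to assemble the statement from Forstneri\v c's theorems in \cite{Forstneric2003AM} and \cite{Forstneric2018PAMS} (see also \cite[\textsection 9.13]{Forstneric2017}), together with minor inspections of their proofs in order to ensure that the approximation on $K$, the interpolation along $V$, the submersion property, and (in the last part) the exact zero divisor can all be realized simultaneously.

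First I would produce a holomorphic map $h_1\colon X\to\c^q$ which approximates $h$ uniformly on $K$ to within, say, $\epsilon/2$ and agrees with $h$ to order $s$ along $V$. This is a routine combination of the Oka-Weil theorem on the $\Ocal(X)$-convex compact $K$ with Cartan's interpolation theorem on the closed complex subvariety $V$; the two conditions are compatible because $h$ itself provides a joint local solution on the common neighborhood $U$ of $K\cup V$. At this stage $h_1$ is merely a holomorphic map, with no submersion property outside $U$. The heart of the argument is then to deform $h_1$ into a holomorphic submersion $\wt h\colon X\to\c^q$ without destroying the approximation on $K$ or the $s$-th order interpolation on $V$. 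This is the content of the main theorem of \cite{Forstneric2003AM} (in the form stated as \cite[Theorem 9.13.1]{Forstneric2017}): such a deformation exists provided one can exhibit a $q$-coframe on $X$ that coincides with $dh_1$ on a neighborhood of $K\cup V$. The given $q$-coframe $\theta$ supplies this, since $\theta|_U=dh$ and $h_1$ can be chosen (by tightening the Oka-Cartan step on a slightly larger neighborhood and invoking interior Cauchy estimates) to approximate $h$ in the $C^1$-sense on a neighborhood of $K\cup V$. A small homotopy of $q$-coframes, stationary away from this neighborhood, then makes $\theta$ coincide with $dh_1$ there; Forstneri\v c's inductive construction via Cartan pairs and the convex approximation property for submersions produces the required $\wt h$.

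For the ``furthermore'' assertion, assume in addition that $h|_V\equiv 0$ and that $h$ admits a continuous extension to $X$ that is nonzero off $V$. Then $V$ arises as the zero divisor of a continuous $\c^q$-valued map on $X$ that is holomorphic near $V$, so the associated Cousin II problem is solvable by holomorphic data, and \cite[Theorem 1.1 and Corollary 1.2]{Forstneric2018PAMS} upgrade this to a noncritical holomorphic defining submersion $\wt h$ with $\wt h^{-1}(0)=V$. The hard part will be to run all four requirements---$K$-approximation, $s$-th order interpolation along $V$, submersivity, and the exact zero locus $\wt h^{-1}(0)=V$---within one and the same inductive scheme. Inspecting the Cartan-pair argument in the cited proofs shows that the local corrections can be kept in the ideal sheaf of $V$ (preserving the zero set), small on $K$ (preserving approximation), and of sufficiently high order along $V$ (preserving interpolation), so that the four conditions remain jointly stable through the induction. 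This is precisely the ``inspection of the proofs'' that justifies compiling the four separately established properties into the single statement above.
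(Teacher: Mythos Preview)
Your proposal is correct and matches the paper's own treatment: the paper does not give an independent proof but states that Theorem \ref{th:Divisor} is a direct compilation of \cite[Theorem 2.5]{Forstneric2003AM} and \cite[Theorem 1.3]{Forstneric2018PAMS}, the only addition being that approximation on $K$ and the non-vanishing condition $\wt h^{-1}(0)=V$ hold simultaneously, which ``is clearly ensured by an inspection of the proofs in the cited sources.'' Your outline is a faithful (and slightly more explicit) rendering of exactly this; the intermediate map $h_1$ you introduce is not strictly necessary, since Forstneri\v c's theorems already deliver approximation, interpolation, and submersivity in one stroke from the local data plus the $q$-coframe, but it does no harm.
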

The only novelty in the statement of Theorem \ref{th:Divisor} with respect to the combination of \cite[Theorem 2.5]{Forstneric2003AM} and \cite[Theorem 1.3]{Forstneric2018PAMS} is that the approximation and the non-vanishing conditions satisfied by $\wt h$ hold true simultaneously. This addition is clearly ensured by an inspection of the proofs in the cited sources. 

Since the ball $\b_n\subset\c^n$ is a Stein manifold, Theorem \ref{th:Divisor} applies with $X=\b_n$. 

%
%
\subsection{Labyrinths}\label{ss:l}
We shall use the labyrinths of compact sets in spherical shells that were introduced 
in \cite{AlarconGlobevnikLopez2019Crelle}. 
%
%
\begin{definition}\label{def:labyrinth}
Let $n\ge 2$ be an integer and let $0<r<R$. We shall say that a compact set $L$ in the spherical shell $R\b_n\setminus r\overline\b_n=\{z\in\c^n\colon r<|z|<R\}$ is a {\em tangent labyrinth} if $L$ has finitely many connected components, $T_1,\ldots,T_k$ $(k\in\n)$, and $L$ is the support of a tidy collection of tangent balls in the sense of \cite[Def.\ 1.3 and 1.4]{AlarconGlobevnikLopez2019Crelle}; i.e., $L$ satisfies the following conditions:
\begin{itemize}
\item Each $T_j$ is a closed round ball in a real affine hyperplane in $\c^n\equiv\r^{2n}$ which is orthogonal to the position vector of the center $x_j$ of the ball $T_j$. 
\smallskip
\item If $|x_i|=|x_j|$ for some $\{i,j\}\subset\{1,\ldots,k\}$, then the radii of $T_i$ and $T_j$ are equal.
 If $|x_i|<|x_j|$ for some $\{i,j\}\subset\{1,\ldots,k\}$, then $T_i\subset |x_j|\b_n$.
\end{itemize}
\end{definition}

By \cite[Lemma 2.4]{AlarconGlobevnikLopez2019Crelle}, for any numbers $0<r<R$ and $\delta>0$ there is a tangent labyrinth $L$ in $R\b_n\setminus r\overline\b_n$ for which if $\gamma\colon[0,1]\to \c^n$ is a path crossing the spherical shell $R\b_n\setminus r\overline\b_n$ from one side to the other and avoiding $L$, then the length of $\gamma$ is greater than $\delta$. 
An elementary observation (but crucial in our construction) is that there is such a tangent labyrinth with the additional property that the diameter of each of its components is as small as desired. 
%
%
\begin{lemma}
\label{lem:labyrinth}
For any numbers $0<r<R$, $\delta>0$, and $\eta>0$ there is a tangent labyrinth $L$ in $R\b_n\setminus r\overline\b_n$ $(n\ge 2)$ satisfying the following conditions.
\begin{itemize}
\item[\rm (i)] If $\gamma\colon[0,1]\to \c^n$ is a path with $|\gamma(0)|\le r$, $|\gamma(1)|\ge R$, and $\gamma([0,1])\cap L=\varnothing$, then $\length(\gamma)>\delta$.
\smallskip
\item[\rm (ii)] $\diam(T)<\eta$ for every connected component $T$ of $L$.
\end{itemize}
\end{lemma}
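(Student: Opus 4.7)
The plan is to refine \cite[Lemma 2.4]{AlarconGlobevnikLopez2019Crelle}, which yields property~(i) but gives no control on the diameters of the components, by partitioning the shell into many thin concentric sub-shells and invoking that lemma separately inside each of them. The key observation I will exploit is that a ball lying in a \emph{tangent} hyperplane to the sphere of radius $\rho$ automatically has small radius whenever it is squeezed into a thin concentric shell, so the diameter bound in (ii) should come for free once the sub-shells are taken thin enough.

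Concretely, I would fix $N\in\n$ so large that $\sqrt{2R(R-r)/N}<\eta/2$, set $r_i:=r+i(R-r)/N$ for $i=0,\ldots,N$, and consider the sub-shells $S_i:=r_i\b_n\setminus r_{i-1}\overline\b_n$. Applying \cite[Lemma 2.4]{AlarconGlobevnikLopez2019Crelle} inside each $S_i$ with parameters $(r_{i-1},r_i,2\delta/N)$ in place of $(r,R,\delta)$ produces a tangent labyrinth $L_i\subset S_i$ such that any path joining $\{|z|\le r_{i-1}\}$ to $\{|z|\ge r_i\}$ and avoiding $L_i$ has length greater than $2\delta/N$. I then define $L:=L_1\cup\cdots\cup L_N$; because the sub-shells are pairwise disjoint and every component of $L_i$ lies in $r_i\b_n$, the ordering clause of Definition~\ref{def:labyrinth} across different sub-shells is automatic, while within a single sub-shell it is guaranteed by \cite[Lemma 2.4]{AlarconGlobevnikLopez2019Crelle}, so $L$ is a tangent labyrinth in $R\b_n\setminus r\overline\b_n$.

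For (ii), I would write a component $T$ of $L_i$ as a closed round ball of radius $\alpha$ in the hyperplane through its centre $x$ orthogonal to $x$; any point $y\in T$ satisfies $y=x+v$ with $\langle v,x\rangle=0$ and $|v|\le\alpha$, so $|y|^2=|x|^2+|v|^2$, and since $|y|<r_i$ and $|x|\ge r_{i-1}$ this forces
\[
\alpha\le\sqrt{r_i^2-r_{i-1}^2}=\sqrt{(r_i-r_{i-1})(r_i+r_{i-1})}\le\sqrt{2R(R-r)/N}<\eta/2,
\]
so $\diam(T)=2\alpha<\eta$. For (i), given $\gamma\colon[0,1]\to\c^n$ with $|\gamma(0)|\le r$, $|\gamma(1)|\ge R$, and $\gamma([0,1])\cap L=\varnothing$, I set $t_0:=0$ and inductively $t_i:=\inf\{t>t_{i-1}:|\gamma(t)|\ge r_i\}$, observe that the consecutive sub-arcs $\gamma|_{[t_{i-1},t_i]}$ are disjoint, avoid $L_i\subset L$, and join $\{|z|\le r_{i-1}\}$ to $\{|z|\ge r_i\}$, and conclude $\length(\gamma)>N\cdot(2\delta/N)=2\delta>\delta$.

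I expect the only real conceptual obstacle to be recognising that no refinement of the internal machinery of \cite[Lemma 2.4]{AlarconGlobevnikLopez2019Crelle} is needed at all: the tangency condition in Definition~\ref{def:labyrinth} mechanically converts the small radial thickness of a sub-shell into a small bound on the tangential radius of any component inscribed in it, so one just has to stack $N$ copies of the existing construction. Everything else is routine; the only thing to watch is that the containment condition of Definition~\ref{def:labyrinth} between components in different sub-shells is preserved by the disjoint stacking, which it is because any $T\in L_i$ already satisfies $T\subset r_i\b_n\subset |x_j|\b_n$ for any centre $x_j$ of a component of $L_k$ with $k>i$.
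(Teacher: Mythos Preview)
Your proof is correct, and it rests on exactly the same geometric observation as the paper's: by Pythagoras, a tangent ball squeezed into a thin spherical shell automatically has small radius. However, the route you take is more elaborate than necessary. The paper does not subdivide the shell into $N$ thin pieces; it simply picks a single number $R_0$ with $r<R_0<\min\{R,\sqrt{r^2+\eta^2/4}\}$, applies \cite[Lemma~2.4]{AlarconGlobevnikLopez2019Crelle} once in the thin shell $R_0\b_n\setminus r\overline\b_n$ with parameter $\delta$, and observes that the resulting labyrinth $L$ is also a tangent labyrinth in the larger shell $R\b_n\setminus r\overline\b_n$. Condition~(i) follows immediately because any path crossing the big shell must cross the thin one, and condition~(ii) follows from $\diam(T)<2\sqrt{R_0^2-r^2}<\eta$. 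Your stacking argument and inductive choice of the $t_i$ are therefore unnecessary, though harmless; the only cost is the extra bookkeeping needed to check that the union of the $L_i$ is again a tangent labyrinth in the sense of Definition~\ref{def:labyrinth} (which you do correctly).
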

\begin{proof}
Pick a positive number $R_0$ such that 
\begin{equation}\label{eq:R'}
	r< R_0<\min\Big\{R\,,\, \sqrt{r^2+\frac{\eta^2}4}\Big\}.
\end{equation}
Applying \cite[Lemma 2.4]{AlarconGlobevnikLopez2019Crelle} to the spherical shell $R_0\b_n\setminus r\overline\b_n$ and the number $\delta>0$ we obtain a tangent labyrinth $L$ in $R_0\b_n\setminus r\overline\b_n$ for which condition {\rm (i)} is satisfied. Since $r<R_0<R$ and $L$ is a tangent labyrinth in $R_0\b_n\setminus r\overline\b_n$, the compact set $L$ is also a tangent labyrinth in the larger spherical shell $R\b_n\setminus r\overline\b_n$. On the other hand, since each component $T$ of $L$ is contained in a real affine hyperplane in $\r^{2n}\equiv\c^n$ and $T\subset R_0\b_n\setminus r\overline\b_n$, Pythagoras' theorem gives
\[
	\diam(T)<2\sqrt{R_0^2-r^2}.
\]
In view of the second inequality in \eqref{eq:R'}, this ensures condition {\rm (ii)}.
\end{proof}

It follows from Definition \ref{def:labyrinth} that if $T_1,\ldots, T_k$ are the components of a tangent labyrinth $L$ in $R\b_n\setminus r\overline\b_n$, then, up to a reordering, there are compact (geometrically) convex subsets $C_1\Subset \cdots\Subset C_k$ in $R\b_n$ satisfying $r\overline\b_n\cup\bigcup_{i=1}^j T_i\subset\mathring C_j$ for all $j\in\{1,\ldots,k\}$ and $C_j\cap \bigcup_{i=j+1}^k T_i=\varnothing$ for all $j\in\{1,\ldots,k-1\}$. Recall on the other hand that, by Kallin's lemma (see \cite{Kallin1965} or \cite[p.\ 62]{Stout2007PM}) and the Oka-Weil theorem (see \cite[Theorem 1.5.1]{Stout2007PM}), if $C$ and $T$ are two disjoint compact convex sets in $\c^n$ and $K\subset C$ is a compact polynomially convex set, then the union $K\cup T$ is polynomially convex as well. 
A straightforward finite recursive application of this fact shows the following property of tangent labyrinths which is pointed out in \cite{AlarconForstneric2017PAMS}.
%
%
\begin{remark}\label{rem:pol}
If $L$ is a tangent labyrinth  in $R\b_n\setminus r\overline\b_n$ $(0<r<R$, $n\ge 2)$, then the compact set $r\overline\b_n\cup L\subset\c^n$ is polynomially convex.
\end{remark}

\section{Proof of Theorem \ref{th:intro-q}}\label{sec:proof}

The following more precise version of Theorem \ref{th:intro-q}, including approximation, interpolation, and certain control on the growth of the length of divergent curves within a fibre, is the first main result of this paper.
%
%
\begin{theorem}\label{th:MR}
Let $n$ and $q$ be integers with $1\le q<n$.
Let $V$ be a smooth closed complex submanifold of pure codimension $q$ in $\b_n$ and assume that there are a holomorphic submersion $h\colon U\to\c^q$ on a neighborhood $U$ of $V$ in $\b_n$ and a $q$-coframe $\theta$ on $\b_n$ (see Subsec.\ \ref{ss:COW}) such that 
\begin{equation}\label{eq:VhtUh}
	V\subset h^{-1}(0)\quad \text{and}\quad \theta|_U=dh.
\end{equation}
Choose a polynomially convex compact set $K\subset U$ and an increasing sequence $0<r_1<R_1<r_2<R_2<\cdots$ with 
$K\subset r_1\b_n$ and $\lim_{j\to\infty} r_j=1$. 
Then, for any number $\epsilon>0$ and any increasing sequence $0<\delta_1<\delta_2<\cdots$ there is a holomorphic submersion $f\colon \b_n\to\c^q$ satisfying the following conditions.
\begin{itemize}
\item[\it i)] $f(z)=0$ for all $z\in V$ and $f$ agrees with $h$ to any given finite order on $V$. Hence, $V$ is a union of components of the fibre $f^{-1}(0)$ of $0\in\c^q$ under $f$.
\smallskip
\item[\it ii)] $|f(z)-h(z)|<\epsilon$ for all $z\in K$.
\smallskip
\item[\it iii)] For any $\lambda>0$ there is $j_\lambda\in\n$ for which if $\gamma\colon[0,1]\to \b_n$ is a path such that $|\gamma(0)|\le r_j$ and $|\gamma(1)|\ge R_j$ for some $j\ge j_\lambda$ and 
\[
	\lambda\le|f(\gamma(t))|\le \frac1{\lambda}\quad \text{for all $t\in [0,1]$},
\]
then $\length (\gamma)>\delta_j$. 
In particular, every proper path $[0,1)\to\b_n$ on which $|f|$ is bounded above and bounded away from zero has infinite length.
\smallskip
\item [\it iv)] If $f^{-1}(0)\setminus V\neq\varnothing$ and $\gamma\colon[0,1]\to f^{-1}(0)\setminus V$ is a path such that $|\gamma(0)|\le r_j$ and $|\gamma(1)|\ge R_j$ for some $j\in\n$, then $\length (\gamma)>\delta_j$. In particular, every proper path $[0,1)\to\b_n$ whose image lies in $f^{-1}(0)\setminus V$ has infinite length.
\end{itemize}
Furthermore, if $h$ admits a continuous extension to $\b_n$ with $h(z)\neq 0\in\c^q$ for all $z\in\b_n\setminus V$, then the submersion $f$ can be chosen with $f^{-1}(0)=V$.
\end{theorem}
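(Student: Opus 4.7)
The overall strategy is to obtain $f$ as the limit of holomorphic submersions $f_j\colon\b_n\to\c^q$ constructed inductively by iterating the Forstneri\v c-Oka-Weil-Cartan theorem (Theorem \ref{th:Divisor}). Coupled with the $f_j$'s, and in a mutually dependent manner, we build tangent labyrinths $L_j\subset R_j\b_n\setminus r_j\overline\b_n$ via Lemma \ref{lem:labyrinth} whose components behave sharply differently according as they do or do not meet $V$. Writing $L_j=L_j^V\sqcup L_j^*$ accordingly, we arrange $|f_j|$ to be very small on $L_j^V$ and very large on $L_j^*$; any fibre $f^{-1}(c)$ with $c\in\c^q$ of moderate size will then be forced to avoid $L_j$ for every sufficiently large $j$, and the length condition in Lemma \ref{lem:labyrinth}{\rm (i)} will yield its completeness. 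Similarly, $f^{-1}(0)\setminus V$, which is automatically disjoint from $L_j^*$, will be forced to avoid $L_j^V$ as well because the latter lies inside a tubular neighborhood of $V$ where the zero locus of $f$ coincides with $V$.

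For the inductive step, suppose a holomorphic submersion $f_{j-1}\colon\b_n\to\c^q$ with $f_{j-1}|_V\equiv 0$ has been constructed (the base case $f_0$ is obtained by a single application of Theorem \ref{th:Divisor} to the data $(K,V,h,\theta)$). Since $V$ is a union of connected components of $f_{j-1}^{-1}(0)$, there is an open tubular neighborhood $\Omega_j\supset V$ in $\b_n$ with $f_{j-1}^{-1}(0)\cap\Omega_j=V\cap\Omega_j$. Apply Lemma \ref{lem:labyrinth} with parameters $r_j<R_j$, $\delta_j+1$, and $\eta_j>0$ so small that every component of $L_j$ meeting $V$ is contained in $\Omega_j$ and that $\sup_{L_j^V}|f_{j-1}|<2^{-j}$; this is possible because $f_{j-1}$ vanishes on $V$ and each component of $L_j^V$ concentrates around a point of $V$ as $\eta_j\to 0$. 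Next, define $\wt h_j$ on a (disconnected) neighborhood of $K_j:=r_j\overline\b_n\cup L_j\cup V$ by taking $\wt h_j:=f_{j-1}$ on a neighborhood of $r_j\overline\b_n\cup L_j^V\cup V$, and $\wt h_j:=f_{j-1}+c_T$ on a small neighborhood of each component $T\subset L_j^*$, where $c_T\in\c^q$ satisfies $|c_T|>j+\sup_T|f_{j-1}|$. The two pieces share the differential $df_{j-1}$, hence $\wt h_j$ is a holomorphic submersion and $d\wt h_j=\theta_j$ for the $q$-coframe $\theta_j:=df_{j-1}$ on $\b_n$. The compact set $K_j$ is $\Ocal(\b_n)$-convex by Remark \ref{rem:pol}, so Theorem \ref{th:Divisor} furnishes a holomorphic submersion $f_j\colon\b_n\to\c^q$ approximating $\wt h_j$ within $\epsilon_j$ on $K_j$ and agreeing with $\wt h_j$ to order $s_j$ on $V$.

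By choosing $\epsilon_j\to 0$ fast enough and $s_j\to\infty$, the sequence $f_j$ converges uniformly on compacts of $\b_n$ to a holomorphic map $f$ with $f|_V=0$ that agrees with $h$ to any prescribed finite order on $V$; this yields {\rm (i)} and, by telescoping the estimates back to $|f_0-h|$ on $K$, also {\rm (ii)}. Cauchy estimates on the convergence $f_j\to f$ on nested compacts give $df_j\to df$ uniformly on compacts, and the $\epsilon_j$ can be taken small enough relative to the current lower bound of $|df_{j-1}|$ that $df$ remains nowhere zero, making $f$ a submersion. For {\rm (iii)}, given $\lambda>0$ choose $j_\lambda$ with $2^{-j_\lambda}<\lambda$ and $j_\lambda>1/\lambda$, and impose all subsequent approximations tightly enough that $|f|<\lambda$ on $L_j^V$ and $|f|>1/\lambda$ on $L_j^*$ for every $j\ge j_\lambda$; a path with $\lambda\le|f|\le 1/\lambda$ then avoids $L_j$, so crossing the $j$-th shell has length $>\delta_j$ by Lemma \ref{lem:labyrinth}{\rm (i)}. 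For {\rm (iv)}, any path $\gamma\subset f^{-1}(0)\setminus V$ misses $L_j^*$ since $|f|>0$ there; and by enforcing, via sufficiently high order of agreement and sufficiently tight approximation, the persistent identity $f^{-1}(0)\cap\Omega_j=V\cap\Omega_j$ at every step, $\gamma$ also misses $L_j^V\subset\Omega_j$, and Lemma \ref{lem:labyrinth}{\rm (i)} applies. Finally, if $h$ admits a continuous extension to $\b_n$ with $h\neq 0$ off $V$, the non-vanishing refinement of Theorem \ref{th:Divisor} allows us to demand $f_j^{-1}(0)=V$ at each step, a property that passes to the limit.

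The main obstacle is the carefully interleaved choice of parameters: the labyrinth diameter $\eta_j$ must be chosen in terms of the already-constructed $f_{j-1}$ (in contrast with the sequential approach of \cite{Globevnik2015AM,AlarconGlobevnikLopez2019Crelle}, where the labyrinth is built first), the thickness of $\Omega_j$ must be compatible with $\eta_j$, the approximation tolerance $\epsilon_j$ must be small relative to the current lower bound of $|df_{j-1}|$ so that $df$ does not degenerate in the limit, and the magnitudes $|c_T|$ must grow fast enough to force the completeness in {\rm (iii)} while being consistent with the polynomial convexity of $K_j$ used to invoke Theorem \ref{th:Divisor}. The necessity of locating $L_j^V$ inside a tube where $f_{j-1}^{-1}(0)=V$ is the technical innovation that makes {\rm (iv)} work even though $V$ itself is not assumed to be complete or connected.
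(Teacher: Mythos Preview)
Your plan is essentially the paper's: obtain $f_0$ from Theorem \ref{th:Divisor}, then iterate, at each step choosing a tangent labyrinth $L_j$ via Lemma \ref{lem:labyrinth} with components of small diameter, splitting $L_j$ into the part meeting $V$ and the part missing $V$, leaving $f_{j-1}$ unchanged near the former and shifting it by a large constant near the latter, and feeding the result (together with the exact coframe $df_{j-1}$) back into Theorem \ref{th:Divisor}. The paper organizes this through a list of inductive conditions {\rm (1$_j$)}--{\rm (8$_j$)} (its Lemma \ref{lem:ML}); your conditions are the same in spirit. Your treatment of {\rm (iv)} is a little loose---you must track neighborhoods $O_i\supset L_i$ with $f_j^{-1}(0)\cap O_i\setminus V=\varnothing$ for \emph{all} $i\le j$, and then pass to the limit via an open-mapping (Hurwitz-type) argument using that $f$ is a submersion---but this is routine once stated.

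There is, however, a genuine gap in your handling of the final assertion. To invoke the non-vanishing refinement of Theorem \ref{th:Divisor} at step $j$ you need your local datum $\wt h_j$ to admit a \emph{continuous extension to all of $\b_n$ that does not vanish off $V$}. With the additive recipe $\wt h_j=f_{j-1}+c_T$ on each component $T\subset L_j^*$, any continuous interpolation between $f_{j-1}$ and $f_{j-1}+c_T$ across the annular collar around $T$ will in general acquire zeros (the translate of the zero set of $f_{j-1}$ by $-tc_T$ sweeps through the collar as $t$ runs from $0$ to $1$). The paper resolves this by switching, in the case $f_{j-1}^{-1}(0)=V$, to a \emph{multiplicative} modification $\phi=C\,f_{j-1}$ on $L_j^*$ (with $C>1$ large), interpolated by $\rho(z)f_{j-1}(z)$ with $\rho\colon\b_n\to[1,C]$ continuous; this manifestly preserves $\phi^{-1}(0)=V$. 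The price is that $d\phi$ is no longer globally $df_{j-1}$: on the collar one must take the (merely continuous, non-exact) $q$-coframe $\theta_j=\rho\,df_{j-1}$, which is still pointwise independent because $\rho>0$. This is precisely the content of Case 2 in the paper's Claim \ref{cl:ohi}, and it is the one place where a non-exact coframe is genuinely needed. Your sentence ``the non-vanishing refinement of Theorem \ref{th:Divisor} allows us to demand $f_j^{-1}(0)=V$ at each step'' skips this point; as written, the hypothesis of that refinement is not verified.
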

Concerning the assumptions on the submanifold $V$ in the theorem recall that, by the Docquier and Grauert tubular neighborhood theorem  \cite{DocquierGrauert1960MA}, if $V$ is a smooth closed complex submanifold of pure codimension $q$ in $\b_n$, then there are an open neighborhood $U$ of $V$ in $\b_n$ and a holomorphic submersion $h\colon U\to\c^q$ such that $V\subset h^{-1}(0)$ if and only if the normal bundle $N_{V/\b_n}=T\b_n|_V/TV$ of $V$ in $\b_n$ is trivial.
Together with the additional condition that there is a $q$-coframe $\theta$ on $\b_n$ such that $\theta|_U=dh$, this implies that $V$ is a union of connected components of a fibre of a holomorphic submersion from $\b_n$  to $\c^q$, which is equivalent to the assumption on $V$ in Theorem \ref{th:intro-q}. (This remains true with $\b_n$ replaced by any Stein manifold of dimension greater than $q$; see \cite{Forstneric2003AM}.)

%
%
%

%
%
\begin{proof}
We begin with the following reduction which, in view of the assumptions on $h$ and $\theta$ in \eqref{eq:VhtUh}, is ensured by Theorem \ref{th:Divisor}.
%
%
\begin{claim}\label{cl:f0}
We may assume without loss of generality that $h$ extends to a holomorphic submersion  
\begin{equation}\label{eq:f0}
f_0\colon\b_n\to\c^q.
\end{equation} 
In particular, $f_0^{-1}(0)\cap U=h^{-1}(0)\supset V$.
Furthermore, if $h$ admits a continuous extension to $\b_n$ with $h(z)\neq 0\in\c^q$ for all $z\in\b_n\setminus V$, then we may assume in addition that $f_0(z)\neq 0$ for all $z\in\b_n\setminus V$ and hence $f_0^{-1}(0)=V$.
\end{claim}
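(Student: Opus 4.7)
The plan is to obtain $f_0$ as a direct application of the Forstneri\v c--Oka--Weil--Cartan extension theorem (Theorem \ref{th:Divisor}) and then to observe that, once $f_0$ is in hand, replacing the original pair $(h,\theta)$ by the globally defined pair $(f_0|_U,df_0)$ preserves every hypothesis of Theorem \ref{th:MR}. First I would verify the hypotheses of Theorem \ref{th:Divisor} for the data $(X,K,V,q,h,\theta)=(\b_n,K,V,q,h,\theta)$: the ball $\b_n$ is Stein; the compact set $K\subset U$ is polynomially convex in $\c^n$ and hence $\Ocal(\b_n)$-convex; $V$ is a closed complex subvariety of $\b_n$; the open set $U$ is a common neighborhood of $K$ and $V$; $h\colon U\to\c^q$ is a holomorphic submersion; and $V\subset h^{-1}(0)$ and $\theta|_U=dh$ by \eqref{eq:VhtUh}. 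For any prescribed $s\in\n$ and $\epsilon'\in(0,\epsilon)$, Theorem \ref{th:Divisor} then yields a holomorphic submersion $f_0\colon\b_n\to\c^q$ that is $\epsilon'$-close to $h$ on $K$ and agrees with $h$ to order $s$ along $V$. Since $V\subset h^{-1}(0)$, interpolation gives $f_0|_V=0$, hence $V\subset f_0^{-1}(0)$.

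With $f_0$ in hand, the reduction is essentially formal. Set $h_{\mathrm{new}}:=f_0|_U$ and $\theta_{\mathrm{new}}:=df_0$. Then $h_{\mathrm{new}}$ is a holomorphic submersion on $U$ as the restriction of a global submersion; $\theta_{\mathrm{new}}$ is a $q$-coframe on all of $\b_n$ because $f_0$ is a submersion on $\b_n$; and tautologically $\theta_{\mathrm{new}}|_U=dh_{\mathrm{new}}$, $V\subset h_{\mathrm{new}}^{-1}(0)$, so \eqref{eq:VhtUh} persists. The identity
\[
f_0^{-1}(0)\cap U=h_{\mathrm{new}}^{-1}(0)\supset V
\]
then holds by construction. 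The approximation requirement \textit{ii)} of Theorem \ref{th:MR} for the original $h$ follows from the corresponding requirement for $h_{\mathrm{new}}$ by the triangle inequality (provided $\epsilon'<\epsilon/2$ was chosen at the start), while the remaining conclusions \textit{i)}, \textit{iii)}, \textit{iv)} depend only on $h_{\mathrm{new}}$. Thus, after renaming, one may proceed under the simplifying assumption that $h$ is a holomorphic submersion on the whole of $\b_n$ and $\theta=dh$ is a global $q$-coframe, which is the content of the first part of the claim.

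For the \emph{furthermore} clause, the extra hypothesis that $h$ extends continuously to $\b_n$ with $h(z)\ne 0$ for $z\in\b_n\setminus V$ is precisely the additional hypothesis in the second assertion of Theorem \ref{th:Divisor}; invoking that assertion provides an $f_0$ with the additional property $f_0^{-1}(0)=V$, whence $f_0(z)\ne 0$ for every $z\in\b_n\setminus V$. The only point in the whole argument that is not completely formal is that the three conditions furnished by Theorem \ref{th:Divisor}---approximation on $K$, $s$-th order interpolation along $V$, and global non-vanishing off $V$---can be realised simultaneously by one submersion; this is exactly the combined statement recorded in Theorem \ref{th:Divisor}, whose validity follows from inspection of the proofs of \cite[Theorem 2.5]{Forstneric2003AM} and \cite[Theorem 1.3]{Forstneric2018PAMS}, as noted immediately after its statement. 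No substantial obstacle therefore arises.
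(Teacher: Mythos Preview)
Your proposal is correct and follows exactly the paper's approach: the paper's entire proof of this claim is the single sentence ``in view of the assumptions on $h$ and $\theta$ in \eqref{eq:VhtUh}, is ensured by Theorem \ref{th:Divisor},'' and you have simply spelled out the details of that invocation, including the triangle-inequality bookkeeping for condition \textit{ii)} and the $s$-th order interpolation needed for condition \textit{i)}.
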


Call $\epsilon_0=\epsilon$ and let $1>\lambda_1>\lambda_2>\cdots$ be a decreasing sequence of positive numbers  with 
\begin{equation}\label{eq:llj0}
	\lim_{j\to\infty}\lambda_j=0.
\end{equation} 
Also set $L_0=\varnothing$ and $O_0=\varnothing$.

A submersion $f\colon \b_n\to\c^q$ satisfying the conclusion of the theorem shall be obtained in an inductive way as the limit of a sequence of holomorphic submersions $f_j\colon\b_n\to\c^q$ $(j\in\z_+)$; see \eqref{eq:f}. The basis of the induction shall be given by the already fixed submersion $f_0$ in \eqref{eq:f0}. 

The main step in the proof is enclosed in the following lemma.
%
%
\begin{lemma}\label{lem:ML}
There is a sequence $S_j=\{f_j,\epsilon_j,L_j,O_j\}$ $(j\in\n)$, where
\begin{itemize}
\item $f_j\colon \b_n\to\c^q$ is a holomorphic submersion,
\smallskip
\item $\epsilon_j>0$ is a number,
\smallskip
\item $L_j$ is a tangent labyrinth in $R_j\b_n\setminus r_j\overline \b_n$ (see Definition \ref{def:labyrinth}), and
\smallskip
\item $O_j$ is an open neighborhood of $L_j$ in $R_j\b_n\setminus r_j\overline \b_n$,
\end{itemize}
 such that the following conditions are satisfied for all $j\in\n$.
\begin{itemize}
\item[\rm (1$_j$)] $|f_j(z)-f_{j-1}(z)|<\epsilon_j$ for all $z\in r_j\overline\b_n$.
\smallskip
\item[\rm (2$_j$)] $f_j(z)=0\in\c^q$ for all $z\in V$  and $f_j$ agrees with $h$ to the given finite order everywhere on $V$.
\smallskip
\item[\rm (3$_j$)] If $f_0^{-1}(0)=V$, then $f_j(z)\neq 0$ for all $z\in\b_n\setminus V$.
\smallskip
\item[\rm (4$_j$)] $\displaystyle 0<\epsilon_j<\epsilon_{j-1}/2$.
\smallskip
\item[\rm (5$_j$)] If $\varphi\colon \b_n\to\c^q$ is a holomorphic map such that $|\varphi(z)-f_{j-1}(z)|<2\epsilon_j$ for all $z\in r_j\overline\b_n$, then $\varphi$ is submersive everywhere in $R_{j-1}\overline\b_n$.
\smallskip
\item[\rm (6$_j$)] If $\gamma\colon [0,1]\to \b_n$ is a path such that $|\gamma(0)|\le r_j$, $|\gamma(1)|\ge R_j$, and $\gamma([0,1])\cap L_j=\varnothing$, then $\length(\gamma)> \delta_j$.
\smallskip
\item[\rm (7$_j$)] If $z\in L_j$, then either $|f_j(z)|<\lambda_j$ or $|f_j(z)|>1/\lambda_j$.
\smallskip
\item[\rm (8$_j$)] $f_j^{-1}(0)\cap O_i\setminus V=\varnothing$ for all $i\in\{0,\ldots,j\}$.
\end{itemize}
\end{lemma}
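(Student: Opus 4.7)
The plan is to proceed by induction on $j$. The base case is $S_0=\{f_0,\epsilon_0,\varnothing,\varnothing\}$ with $f_0$ from Claim \ref{cl:f0} and $\epsilon_0=\epsilon$. For the inductive step, given $S_0,\ldots,S_{j-1}$, I would build $S_j$ by choosing, in order, the approximation error $\epsilon_j$, the labyrinth $L_j$, the submersion $f_j$, and the neighborhood $O_j$.

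First I pick $\epsilon_j\in(0,\epsilon_{j-1}/2)$ small enough that, via Cauchy estimates on the nested pair $R_{j-1}<r_j$, every holomorphic $\varphi\colon\b_n\to\c^q$ with $\sup_{r_j\overline\b_n}|\varphi-f_{j-1}|<2\epsilon_j$ is $C^1$-close to $f_{j-1}$ on $R_{j-1}\overline\b_n$ and therefore submersive there, which secures (5$_j$); I also demand $\epsilon_j$ smaller than the positive infimum of $|f_{j-1}|$ on the compact set $\overline{O_i}\setminus N_i$ for each $i<j$, where $N_i$ is a thin tube around $V$ in $\b_n$ on which $f_{j-1}^{-1}(0)=V$, such a tube existing because $V$ is a union of connected components of the smooth submanifold $f_{j-1}^{-1}(0)$. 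Writing $M_{j-1}:=\max_{R_j\overline\b_n}\|df_{j-1}\|<\infty$ and choosing $\eta_j<\lambda_j/(2M_{j-1})$, I apply Lemma \ref{lem:labyrinth} to the shell $R_j\b_n\setminus r_j\overline\b_n$ with path-length bound $\delta_j$ and diameter bound $\eta_j$, obtaining a tangent labyrinth $L_j$ satisfying (6$_j$), which I partition as $L_j=L_j^V\sqcup L_j^0$ according to whether a component meets $V$. Since $f_{j-1}|_V=0$ and each component of $L_j^V$ has diameter less than $\eta_j$, the mean value inequality yields $|f_{j-1}|<\lambda_j/2$ pointwise on $L_j^V$; by Remark \ref{rem:pol} the compact set $K:=r_j\overline\b_n\cup L_j$ is polynomially convex.

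The key step is the setup for Theorem \ref{th:Divisor}. I define an auxiliary holomorphic map $\wt h$ on an open neighborhood of $K\cup V$ by setting $\wt h=f_{j-1}$ on neighborhoods of $r_j\overline\b_n$, of $V$, and of $L_j^V$; and on a small neighborhood of each component $T$ of $L_j^0$ setting $\wt h=f_{j-1}+c_T$ for a constant $c_T\in\c^q$ with $|c_T|$ large enough that $|\wt h|>1/\lambda_j+1$ on $T$. These domains can be taken pairwise disjoint because the components of $L_j^0$ are pairwise disjoint and disjoint from $V$, from $r_j\overline\b_n$, and from $L_j^V$; hence $\wt h$ is a well-defined holomorphic submersion with $\wt h|_V\equiv 0$ and $d\wt h\equiv df_{j-1}$. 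The $q$-coframe $\theta:=df_{j-1}$ on $\b_n$ thus extends $d\wt h$, and Theorem \ref{th:Divisor} produces a holomorphic submersion $f_j\colon\b_n\to\c^q$ approximating $\wt h$ to within any prescribed $\epsilon'<\min\{\epsilon_j,\lambda_j/2\}$ uniformly on $K$, agreeing with $\wt h$ (and therefore with $h$) to the prescribed finite order on $V$, and satisfying $f_j^{-1}(0)=V$ whenever $f_0^{-1}(0)=V$ via the ``furthermore'' addendum. Conditions (1$_j$)-(3$_j$) and (6$_j$) are now immediate, and (7$_j$) follows by combining the approximation with $|f_{j-1}|<\lambda_j/2$ on $L_j^V$ and $|\wt h|>1/\lambda_j+1$ on $L_j^0$. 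Finally, I take $O_j$ to be a narrow tubular neighborhood of $L_j$ whose $L_j^0$-part lies in the open set $\{|f_j|>1/(2\lambda_j)\}$ and whose $L_j^V$-part lies inside a tube around $V$ on which $f_j^{-1}(0)=V$; this secures (8$_j$) for $i=j$, while (8$_j$) for $i<j$ is inherited from (8$_{j-1}$) thanks to the a priori upper bound imposed on $\epsilon_j$ in the first step.

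The main obstacle, reflecting the novelty of the construction stressed in the introduction, is the simultaneous handling of the two qualitatively different families $L_j^V$ and $L_j^0$. Components of the labyrinth meeting $V$ are controlled by the continuity of $f_j$ near its zero locus $V$, through the a priori diameter bound $\eta_j$ and the known Lipschitz constant $M_{j-1}$, while components disjoint from $V$ are controlled by an a priori vertical shift by large constants $c_T$. Both families must then be combined into a single auxiliary submersion $\wt h$ whose differential matches a global $q$-coframe on $\b_n$ so that the refined Forstneri\v c Oka-Weil-Cartan theorem for submersions (Theorem \ref{th:Divisor}) applies; this is precisely what forces the use of $df_{j-1}$ itself as the coframe and the piecewise-constant-shift construction of $\wt h$, rather than the mere function-approximation argument used in previous works on the topic.
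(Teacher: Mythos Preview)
Your overall architecture matches the paper's: induction with base $S_0$, choice of $\epsilon_j$ via Cauchy estimates for (4$_j$)--(5$_j$), a tangent labyrinth $L_j$ with small component diameter split as $L_j^V\sqcup L_j^0$, an auxiliary submersion on a neighborhood of $r_j\overline\b_n\cup V\cup L_j$, and an application of Theorem~\ref{th:Divisor}. The treatment of (6$_j$), (7$_j$), and (8$_j$) is essentially the same as the paper's.

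There is, however, a genuine gap in your verification of (3$_j$). You invoke the ``furthermore'' clause of Theorem~\ref{th:Divisor}, but that clause requires your auxiliary map $\wt h$ to admit a \emph{continuous extension to all of $\b_n$ that is nonvanishing on $\b_n\setminus V$}. Your $\wt h$ equals $f_{j-1}$ near $r_j\overline\b_n\cup V\cup L_j^V$ and $f_{j-1}+c_T$ near each component $T$ of $L_j^0$; any continuous interpolation between these on the intermediate region takes the form $f_{j-1}(z)+\chi(z)c_T$ with $\chi$ ranging over $[0,1]$, and there is no reason this avoids $0$ off $V$: for $q=1$, say, one easily has $f_{j-1}(z_0)=-\chi(z_0)c_T$ at some $z_0$ however $c_T$ is chosen, since the forbidden set $\{-f_{j-1}(z)/\chi(z)\}$ is unbounded in $\c$ as $\chi\to 0$. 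So the hypothesis of the addendum is simply not met by your construction.

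The paper resolves this by splitting into two cases. When $f_{j-1}^{-1}(0)\neq V$ there is nothing to prove and your shift-by-constant map, with coframe $df_{j-1}$, works. When $f_{j-1}^{-1}(0)=V$ the paper instead \emph{scales}: it sets $\phi=C\,f_{j-1}$ near $L_j^0$ for a large constant $C>1$, and interpolates by $\rho(z)f_{j-1}(z)$ with $\rho\colon\b_n\to[1,C]$ continuous. Since $\rho$ never vanishes, $\phi^{-1}(0)=f_{j-1}^{-1}(0)=V$ globally and the addendum applies. The price is that $d\phi=C\,df_{j-1}$ on the inner region, so the coframe can no longer be $df_{j-1}$; the paper takes the (merely continuous, non-exact) $q$-coframe $\theta_j=\rho\,df_{j-1}$ on $\b_n$, which agrees with $d\phi$ where $\phi$ is holomorphic. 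Your concluding remark that one is ``forced'' to use $df_{j-1}$ as the coframe is therefore exactly backwards in this case: the need for a nonvanishing continuous extension forces a \emph{different} coframe, and this is the content of Claim~\ref{cl:ohi}~(c)--(d) in the paper.
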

%
%
\begin{proof}
We proceed by induction. The basis is provided by the already given holomorphic submersion $f_0\colon\b_n\to\c^q$, which has been fixed in Claim \ref{cl:f0}, together with $\epsilon_0=\epsilon$, $L_0=\varnothing$, and $O_0=\varnothing$. Recall that $f_0|_U=h$ and if in addition $h$ extends to a continuous map from $\b_n$ to $\c^q$ not assuming the value $0\in\c^q$ on $\b_n\setminus V$, then $f_0^{-1}(0)=V$. This and the assumptions on $h$ in the statement of the theorem imply {\rm (2$_0$)}. The rest of the conditions, except for {\rm (3$_0$)}, {\rm (7$_0$)}, and {\rm (8$_0$)} which trivially hold true, are vacuous for $j=0$.

In order to prove the inductive step fix $j\in\n$, assume that we already have tuples $S_i=\{f_i,\epsilon_i,L_i,O_i\}$ for all $i\in\{0,\ldots, j-1\}$, meeting the corresponding requirements, and let us provide a suitable tuple $S_j$. 

Since $f_{j-1}\colon \b_n\to\c^q$  is a holomorphic submersion and $R_{j-1}\overline\b_n\Subset r_j\overline\b_n$ are compact, the Cauchy estimates give a number $\epsilon_j>0$ satisfying {\rm (4$_j$)} and {\rm (5$_j$)}. 

By {\rm (2$_{j-1}$)}, which is satisfied even when $j=1$, we have that $f_{j-1}(z)=0\in\c^q$ for all $z\in V$. Therefore, since $f_{j-1}$ is continuous on $\b_n$ and $R_j\overline \b_n\subset \b_n$ is compact, there is a number $\eta>0$ such that
\begin{equation}\label{eq:tau1}
	|f_{j-1}(z)|<\lambda_j \quad \text{for all $z\in R_j\overline\b_n$ with $\dist(z,V)<\eta$.}
\end{equation}
Moreover, since $f_{j-1}\colon\b_n\to\c^q$ is a holomorphic submersion and $V\subset \b_n$ is a smooth complex submanifold of pure codimension $q$ in $\b_n$ which is contained in $f_{j-1}^{-1}(0)$, we have that $V$ is an open subset of $f_{j-1}^{-1}(0)$. Taking into account that $V$ is closed in $\b_n$, and hence in $f_{j-1}^{-1}(0)$, we infer that $V$ is a union of connected components of the fibre $f_{j-1}^{-1}(0)$ of $0\in\c^q$ under $f_{j-1}$. Therefore,
$f_{j-1}$ defines $V$ as a complete intersection on a neighborhood of $V$ in $\b_n$; i.e., $V$ is the fibre of $0$ under the restriction of $f_{j-1}$ to a neighborhood of $V$ in $\b_n$. Thus, by compactness of $R_j\overline\b_n\subset\b_n$ and up to passing to a smaller $\eta>0$ if necessary, we also have that
\begin{equation}\label{eq:tau2}
	f_{j-1}(z)\neq 0 \quad \text{for all $z\in R_j\overline\b_n\setminus V$ with $\dist(z,V)<\eta$.}
\end{equation}

With the number $\eta>0$ for which \eqref{eq:tau1} and \eqref{eq:tau2} hold true at hand, we apply Lemma \ref{lem:labyrinth} in order to obtain a tangent labyrinth $L_j$ in the spherical shell $R_j\b_n\setminus r_j\overline\b_n$ satisfying the following properties.
\begin{itemize}\label{eq:lab}
\item[\rm (A)]  If $\gamma\colon[0,1]\to \b_n$ is a path such that $|\gamma(0)|\le r_j$, $|\gamma(1)|\ge R_j$, and $\gamma([0,1])\cap L_j=\varnothing$, then $\length(\gamma)>\delta_j$. This is condition {\rm (6$_j$)}.
\smallskip
\item[\rm (B)] The diameter of every connected component of $L_j$ is smaller than $\eta$.
\end{itemize} 

We shall now split the tangent labyrinth $L_j$ into two pieces.
On the one hand, let $\Lambda_V$ denote the union of all the connected components of $L_j$ which have nonempty intersection with $V$. It turns out, by {\rm (B)}, that  
$\dist(z,V)<\eta$ for all $z\in \Lambda_V$; hence, in view of \eqref{eq:tau1} and \eqref{eq:tau2} we have that
\begin{equation}\label{eq:L1}
	0< |f_{j-1}(z)|<\lambda_j \quad \text{for all $z\in \Lambda_V\setminus V$}.
\end{equation}
On the other hand, denote by $\Lambda_0$ the union of all the connected components of $L_j$ which are disjoint from $V$: $\Lambda_0=L_j\setminus \Lambda_V$. Note that either $\Lambda_V$ or $\Lambda_0$ could be empty and, obviously, $L_j=\Lambda_V\cup\Lambda_0$ and $\Lambda_V\cap\Lambda_0=\varnothing$.

The next step in the proof consists in suitably deforming $f_{j-1}$ by means of the Forstneri\v c-Oka-Weil-Cartan theorem for holomorphic submersions (Theorem \ref{th:Divisor}). The precise initial objects for this deformation are provided by the following result.
\begin{claim}\label{cl:ohi}
There are an open neighborhood $W$ of $r_j\overline\b_n\cup V\cup L_j$ in $\b_n$ and a holomorphic submersion $\phi\colon W\to\c^q$ satisfying the following conditions.
\begin{itemize}
\item[\rm (a)] $\phi(z)=f_{j-1}(z)$ for all $z\in r_j\overline\b_n\cup V\cup \Lambda_V$.
\smallskip
\item[\rm (b)] $|\phi(z)|>1/\lambda_j$  for all $z\in\Lambda_0$.
\smallskip
\item[\rm (c)] There is a $q$-coframe $\theta_j=(\theta_{j,1}, \ldots, \theta_{j,q})$ on $\b_n$ such that $\theta_j|_W=d\phi$. 
\smallskip
\item[\rm (d)] If $f_{j-1}^{-1}(0)=V$, then $\phi$ admits a continuous extension to $\b_n$ with $\phi(z)\neq 0$ for all $z\in\b_n\setminus V$. 
\end{itemize}
\end{claim}
By the identity principle, condition {\rm (a)} forces that $\phi(z)=f_{j-1}(z)$ for all $z$ in the component of $W$ containing $r_j\overline\b_n\cup V\cup \Lambda_V$.
\begin{proof}
Denote by $T_1,\ldots, T_k$ $(k\in\n)$ the connected components of the compact set $\Lambda_0\subset R_j\b_n\setminus (r_j\overline\b_n\cup V\cup\Lambda_V)$. Since $\Lambda_0$ is a union of components of the tangent labyrinth $L_j$, we have that each set $T_a$ is a closed round ball in a real affine hyperplane in $\c^n\equiv\r^{2n}$ which is orthogonal to the position vector of the center of the ball (see Definition \ref{def:labyrinth}). In particular, each $T_a$ is homeomorphic to the closed unit ball in  the real Euclidean space $\r^{2n-1}$. 
Consider the compact convex sets
\[
	T_{a,\sigma}=\{z\in\c^n\colon \dist(z,T_a)\le \sigma \mu\}, \quad  
	a=1,\ldots,k,\; \sigma=1,2,
\] 
where $\mu>0$ is so small that
the following conditions are satisfied:
\begin{itemize}
\item[\rm (i)] $T_{a,2}\subset R_j\b_n\setminus (r_j\overline\b_n\cup V\cup \Lambda_V)$ for all $a\in\{1,\ldots,k\}$.
\smallskip
\item[\rm (ii)] The sets $T_{1,2},\ldots, T_{k,2}$ are pairwise disjoint.
\end{itemize}
A number $\mu>0$ with these properties exists since the set $\Lambda_0\subset R_j\b_n\setminus r_j\overline\b_n$ is compact and has finitely many connected components; take also into account that $V$ is closed in $\b_n$, $\Lambda_V$ is compact, and $\Lambda_0\cap (V\cup\Lambda_V)=\varnothing$. 

Obviously, $T_{a,1}\subset\mathring T_{a,2}$ for all $a\in\{1,\ldots,k\}$. 
Set
\[
	\Delta_\sigma= \bigcup_{a=1}^k T_{a,\sigma},\quad \sigma=1,2,
\]
and note that {\rm (i)} ensures that
\begin{equation}\label{eq:W}
	\big(r_j\overline\b_n\cup V\cup \Lambda_V\big)\cap \Delta_2=\varnothing.
\end{equation}
Set
\begin{equation}\label{eq:Wdef}
	W=( \b_n\setminus \Delta_2) \cup \mathring \Delta_1=
	\b_n\setminus\bigcup_{a=1}^k (T_{a,2}\setminus \mathring T_{a,1})
\end{equation}
and observe that $W$ is an open neighborhood of $r_j\overline\b_n\cup V\cup L_j$ in $\b_n$. Indeed, taking into account \eqref{eq:W}, {\rm (ii)}, and the closeness of $\Delta_2$ in $\b_n$ we infer that $\b_n\setminus\Delta_2$ is a connected open neighborhood of $r_j\overline\b_n\cup V\cup \Lambda_V$. On the other hand, we have 
\begin{equation}\label{eq:lambda01}
	\Lambda_0\subset \mathring \Delta_1,
\end{equation}
and hence $\Delta_1$ is an open neighborhood of $\Lambda_0$. Since $L_j=\Lambda_V\cup\Lambda_0$, this shows that  $W$ is an open neighborhood of $r_j\overline\b_n\cup V\cup L_j$, as claimed.

We distinguish cases.

\noindent{\em Case 1.} Suppose that $f_{j-1}^{-1}(0)\setminus V\neq\varnothing$. 
Choose $w_0\in\c^q$ such that 
\begin{equation}\label{eq:w0}
	|f_{j-1}(z)+w_0|>\frac1{\lambda_j}\quad \text{for all $z\in \Lambda_0$}; 
\end{equation}
any $w_0$ with large enough norm meets this requirement since $f_{j-1}(\Lambda_0)\subset\c^q$ is compact. Define
\[
	W\ni z\longmapsto \phi(z)=\left\{
	\begin{array}{ll}
	f_{j-1}(z) & \text{if }z\in \b_n\setminus\Delta_2 \medskip	
	\\
	f_{j-1}(z)+w_0 & \text{if }z\in \mathring \Delta_1.
	\end{array}
	\right.
\]
By {\rm (ii)}, \eqref{eq:W}, \eqref{eq:Wdef}, \eqref{eq:lambda01}, \eqref{eq:w0}, and the facts that $f_{j-1}$ is a holomorphic submersion and that $w_0$ is a constant, it turns out that $\phi\colon W\to\c^q$ is a well defined holomorphic submersion  and satisfies conditions {\rm (a)} and {\rm (b)}.
Since we are assuming that $f_{j-1}^{-1}(0)\setminus V\neq\varnothing$, condition {\rm (d)} is trivially satisfied; thus, to finish the proof it only remains to find a $q$-coframe $\theta_j$ on $\b_n$ satisfying {\rm (c)}. Since $w_0\in\c^q$ is a constant, we have that $d\phi=(d f_{j-1})|_W$, and hence it is clear that the exact holomorphic $q$-coframe $\theta_j=df_{j-1}$ on $\b_n$ meets the required condition.

\smallskip
\noindent{\em Case 2.} Suppose that $f_{j-1}^{-1}(0)=V$.  Since $\Lambda_0\cap V=\varnothing$, we have in this case that $f_{j-1}(z)\neq 0\in\c^q$ for all $z$ in the compact set $\Lambda_0$, and hence there is $C>1$ such that 
\begin{equation}\label{eq:C>}
	C\,|f_{j-1}(z)|>\frac1{\lambda_j}\quad \text{for all $z\in \Lambda_0$}. 
\end{equation}
Consider a continuous map 
$
	\rho\colon \Delta_2\setminus\mathring \Delta_1 =\bigcup_{a=1}^k (T_{a,2}\setminus\mathring T_{a,1})\to [1,C]
$
such that 
\begin{equation}\label{eq:varrho}
	\left\{
	\begin{array}{ll}
	\rho(z)=C & \text{for all }z\in b\Delta_1=\bigcup_{a=1}^k bT_{a,1}\medskip
	\\
	\rho(z)=1 & \text{for all }z\in b\Delta_2=\bigcup_{a=1}^k bT_{a,2}.
	\end{array}
	\right.
\end{equation} 
Existence of such a map is clear since $\Delta_2\setminus\mathring \Delta_1$ is homeomorphic to a finite union of pairwise disjoint closed spherical shells in $\c^n$ and its boundary coincides with $b\Delta_1\cup b\Delta_2$, which is a disjoint union. Define
\[
	\b_n\ni z\longmapsto \phi(z)=\left\{
	\begin{array}{ll}
	f_{j-1}(z) & \text{if }z\in \b_n\setminus \Delta_2 \medskip
	\\
	\rho(z)f_{j-1}(z) & \text{if }z\in \Delta_2\setminus \mathring \Delta_1 \medskip
	\\
	C\, f_{j-1}(z) & \text{if }z\in \mathring  \Delta_1.
	\end{array}
	\right.
\]
By conditions {\rm (i)}, {\rm (ii)}, and \eqref{eq:varrho}, the map $\phi\colon\b_n\to\c^q$ is well defined and continuous. Since $\rho$ has no zeros (recall that it takes values in the interval $[1,C]$) and $C\neq 0$, we have that $\phi^{-1}(0)=f_{j-1}^{-1}(0)=V$; this shows {\rm (d)}. Taking into account \eqref{eq:Wdef} and that $C\neq 0$ is a constant, we infer that $\phi|_W\colon W\to\c^q$ is a holomorphic submersion since so is $f_{j-1}\colon\b_n\to\c^q$. 
(Possibly, $\phi$ is neither holomorphic nor submersive on $\Delta_2\setminus \mathring \Delta_1=\b_n\setminus W$; we do not take care of that.)
On the other hand, it is clear from \eqref{eq:W}, \eqref{eq:lambda01}, and \eqref{eq:C>} that $\phi$ satisfies the requirements {\rm (a)} and {\rm (b)} in the statement of the claim.
To finish, set
\[
	\theta_j=(\theta_{j,1},\ldots,\theta_{j,q})=\left\{
	\begin{array}{ll}
	df_{j-1} & \text{on } \b_n\setminus \Delta_2 \medskip
	\\
	\rho\, df_{j-1} & \text{on } \Delta_2\setminus \mathring \Delta_1 \medskip
	\\
	C\, df_{j-1} & \text{on }\mathring \Delta_1.
	\end{array}
	\right.
\]
It follows from \eqref{eq:varrho} that $\theta_j$ is a continuous differential $(1,0)$-form on $\b_n$. Moreover, since $\rho$ has no zeros, $C\neq 0$, and $f_j$ is a holomorphic submersion on $\b_n$, it turns out that the components $\theta_{j,1},\ldots,\theta_{j,q}$ of $\theta_j$ are linearly independent at every point of $\b_n$; i.e., $\theta_j$ is a $q$-coframe on $\b_n$. Furthermore, taking into account \eqref{eq:Wdef} and that $C$ is a constant, we infer that $\theta_j|_W$ is exact holomorphic and agrees with $d(\phi|_W)$. This shows condition {\rm (c)}.

Claim \ref{cl:ohi} is proved.
\end{proof}

We continue the proof of Lemma \ref{lem:ML}. Fix a real number $\epsilon'$, $0<\epsilon'<\epsilon_j$, to be specified later.

By conditions {\rm (c)} and {\rm (d)} in Claim \ref{cl:ohi}, we may apply Theorem \ref{th:Divisor} to the holomorphic submersion $\phi\colon W\to\c^q$, the closed submanifold $V\subset\b_n$, the polynomially convex compact set $r_j\overline\b_n\cup L_j$ (see Remark \ref{rem:pol}), and the number $\epsilon'>0$. This gives a holomorphic submersion $f_j\colon \b_n\to\c^q$ with the following properties.
\begin{itemize}
\item[\rm (I)] $f_j(z)=0\in\c^q$ for all $z\in V$ and $f_j$ agrees with $\phi$ to any given finite order everywhere on $V$.
\smallskip
\item[\rm (II)] $|f_j(z)-\phi(z)|<\epsilon'$ for all $z\in r_j\overline\b_n\cup L_j$.
\smallskip
\item[\rm (III)] If $\phi$ admits a continuous extension to $\b_n$ with $\phi(z)\neq 0$ for all $z\in \b_n\setminus V$, then $f_j^{-1}(0)=V$.
\end{itemize}

Recall that conditions {\rm (4$_j$)}, {\rm (5$_j$)}, and {\rm (6$_j$)} concerning $\epsilon_j$ and  $L_j$ have been already ensured. We claim that if $\epsilon'>0$ is chosen sufficiently small, then {\rm (1$_j$)}, {\rm (2$_j$)}, {\rm (3$_j$)}, and {\rm (7$_j$)} hold true for the holomorphic submersion $f_j$. Indeed, {\rm (1$_j$)} is implied by Claim \ref{cl:ohi} {\rm (a)} and property {\rm (II)}. Condition {\rm (2$_j$)} follows from {\rm (2$_{j-1}$)}, Claim \ref{cl:ohi} {\rm (a)}, and {\rm (I)}. Properties {\rm (3$_{j-1}$)}, Claim \ref{cl:f0}, Claim \ref{cl:ohi} {\rm (a)} and {\rm (d)}, and {\rm (III)} guarantee {\rm (3$_j$)}. (Recall that {\rm (2$_{j-1}$)} and {\rm (3$_{j-1}$)} are satisfied even when $j=1$.) Finally, in order to check condition {\rm (7$_j$)} choose a point $z\in L_j=\Lambda_V\cup\Lambda_0$. If $z\in \Lambda_V$, then the second inequality in \eqref{eq:L1}, Claim \ref{cl:ohi} {\rm (a)}, and {\rm (II)} imply that $|f_j(z)|<\lambda_j$ provided that $\epsilon'>0$ is chosen sufficiently small; recall that $\Lambda_V$ is compact. If on the contrary $z\in \Lambda_0$, then Claim \ref{cl:ohi} {\rm (b)} and property {\rm (II)} ensure that $|f_j(z)|>1/\lambda_j$ provided that $\epsilon'$ is small enough. This shows {\rm (7$_j$)}.

In order to finish the proof of the lemma note that, assuming that $\epsilon'>0$ is chosen sufficiently small, condition {\rm (2$_{j-1}$)}, the first inequality in \eqref{eq:L1}, Claim \ref{cl:ohi} {\rm (a)} and {\rm (b)}, property {\rm (II)}, and the continuity of $f_j$ ensure the existence of an open neighborhood $O_j$ of $L_j$ in $R_j\b_n\setminus r_j\overline \b_n$ such that 
\begin{equation}\label{eq:Oj}
	f_j^{-1}(0)\cap O_j\setminus V=\varnothing.
\end{equation}
On the other hand, the set $O_i$ in the statement of the lemma is contained in the ball $R_i\b_n\subset r_j\b_n$ for all $i\in\{0,\ldots,j-1\}$ (recall that $O_0=\varnothing$). Therefore, for a sufficiently small choice of $\epsilon'>0$, condition {\rm (8$_{j-1}$)} (which holds true even when $j=1$), Claim \ref{cl:ohi} {\rm (a)}, {\rm (I)}, and {\rm (II)} ensure that $f_j^{-1}(0)\cap O_i\setminus V=\varnothing$ for all $i\in\{0,\ldots,j-1\}$. This and \eqref{eq:Oj} give {\rm (8$_j$)}.

This closes the induction and thus concludes the proof of Lemma \ref{lem:ML}.
\end{proof}
%
%
With Lemma \ref{lem:ML} in hand, the proof of Theorem \ref{th:MR} is completed as follows.
Properties {\rm (1$_j$)} and {\rm (4$_j$)} imply that there is a limit map
\begin{equation}\label{eq:f}
	f=\lim_{j\to\infty}f_j\colon \b_n\to\c^q
\end{equation}
which is holomorphic and satisfies
\begin{equation}\label{eq:f-fj}
	|f(z)-f_{j-1}(z)|<2\epsilon_j<\epsilon_{j-1}\quad \text{for all }z\in r_j\overline\b_n,\; j\in\N.
\end{equation}
Since $K\subset r_1\b_n\cap U$ and $f_0|_U=h$ (see Claim \ref{cl:f0}) we infer from \eqref{eq:f-fj} that $|f(z)-h(z)|<\epsilon_0=\epsilon$ for all $z\in K$. This is condition {\it ii)} in the theorem.

From \eqref{eq:f-fj} and properties {\rm (2$_j$)}, {\rm (3$_j$)}, and {\rm (5$_j$)} we obtain that $f$ is a holomorphic submersion, $f(z)=0\in\c^q$ for all $z\in V$, $f$ agrees with $h$ to the given finite order everywhere on $V$, and, if $f_0^{-1}(0)=V$ (equivalently, if $h$ extends to a continuous map $\b_n\to\c^q$ which does not take the value $0\in\c^q$ off $V$; see Claim \ref{cl:f0}), then $f^{-1}(0)= V$. This shows {\it i)} and the final assertion in the statement of the theorem.

In order to check condition {\it iii)}, pick $\lambda>0$ and, without loss of generality, assume that $\lambda<1$. 
Since $\lim_{j\to\infty} \lambda_j=\lim_{j\to\infty} \epsilon_j=0$ (see \eqref{eq:llj0} and {\rm (4$_j$)}), there is  $j_\lambda\in\n$ so large that
\begin{equation}\label{eq:lambdaj<lambda}
	0<\lambda_j+\epsilon_j<\lambda<\frac1{\lambda}<\frac1{\lambda_j}-\epsilon_j
	\quad \text{for all $j\ge j_\lambda$.}
\end{equation}
We claim that $j_\lambda$ satisfies the first part of {\it iii)}. Indeed, suppose that for some $j\ge j_\lambda$ there is a path $\gamma\colon[0,1]\to \b_n$ such that $|\gamma(0)|\le r_j$, $|\gamma(1)|\ge R_j$, and
\begin{equation}\label{eq:lambda}
	\lambda\le|f(\gamma(t))|\le\frac1{\lambda}\quad \text{for all }t\in[0,1].
\end{equation}
Up to passing to a subpath we may assume that $\gamma([0,1]) \subset r_{j+1}\b_n$. This inclusion, \eqref{eq:f-fj}, \eqref{eq:lambdaj<lambda}, and \eqref{eq:lambda} imply that $\lambda_j< |f_j(\gamma(t))|< 1/\lambda_j$ for all $t\in[0,1]$, and hence $\gamma([0,1])\cap L_j=\varnothing$ by {\rm (7$_j$)}. Thus,  {\rm (6$_j$)} ensures that $\length(\gamma)> \delta_j$, which proves the first part of {\it iii)}. For the second part, choose a proper path $\beta\colon [0,1)\to\b_n$ and assume that $|f|$ is bounded above and bounded away from $0\in\r$  on $\beta([0,1))$. By this assumption, there is a number $\lambda$, $0<\lambda<1$, so small that
\begin{equation}\label{eq:beta}
	\lambda\le|f(\beta(t))|\le\frac1{\lambda}\quad \text{for all }t\in[0,1).
\end{equation}
 Let $j_\lambda$ be the integer furnished by the first part of {\it iii)} for this particular number $\lambda>0$ and let $j_0\ge j_\lambda$ be an integer for which $|\beta(0)|<r_{j_0}$; recall that $\beta(0)\in\b_n$ and $\lim_{j\to\infty} r_j=1$. Since the path $\beta\colon[0,1)\to\b_n$ is proper, we have that $\lim_{t\to1}|\beta(t)|=1$ and hence for each $j\ge j_0$ there is a closed interval $[a_j,b_j]\subset [0,1)$ such that $|\beta(a_j)|\le r_j$, $|\beta(b_j)|\ge R_j$, and the family of intervals $[a_j,b_j]$ $(j\ge j_0)$ are pairwise disjoint. Taking into account that the sequence of positive numbers $\delta_j$ $(j\in\n)$ is increasing, it follows from \eqref{eq:beta} and the first part of {\it iii)} that
\[
	\length(\beta)\ge\sum_{j\ge j_0}\length(\beta|_{[a_j,b_j]})\ge
	\sum_{j\ge j_0}\delta_j\ge \delta_{j_0}\sum_{j\ge 1} 1=+\infty.
\]
This completes the proof of condition {\it iii)}.

Finally note that, by properties {\rm (6$_j$)}, in order to check the first part of {\it iv)} it suffices to show that 
\begin{equation}\label{eq:VLj}
	L_i\cap f^{-1}(0)\setminus V=\varnothing\quad \text{for all $i\in\n$.}
\end{equation}
Reason by contradiction and assume that $f(z_0)=0\in\c^q$ for some $z_0\in L_i\setminus V$, $i\in\n$. Since $f\colon\b_n\to\c^q$ is a submersion, $f(z_0)=0$, and $O_i\setminus V$ is an open neighborhood of $z_0$ in $\b_n$, we infer that $f(O_i\setminus V)$ is a neighborhood of $0$ in $\c^q$. It follows that there is $r>0$ such that the ball $r\b_q$ is contained in $f(O_i\setminus V)\subset\c^q$. Therefore, since $f=\lim_{j\to\infty} f_j$ and every $f_j$ is submersive, there is an integer $m\ge i$ so large that $f_j(O_i\setminus V)\supset \frac{r}2\b_q\ni 0$ for all $j\ge m$, which is not possible in view of {\rm (8$_j$)}. This shows \eqref{eq:VLj} and therefore the first part of condition {\it iv)} in the theorem. The second part is seen in the same way that the second part of {\it iii)}.

This concludes the proof of Theorem \ref{th:MR}.
\end{proof}

\begin{proof}[Proof of Theorem \ref{th:intro-q}]
Let $V$ be as in Theorem \ref{th:intro-q}; i.e., a smooth closed complex submanifold of pure codimension $q$ in $\b_n$  $(1\le q<n)$ which is contained in a fibre of a holomorphic submersion $h\colon \b_n\to\c^q$. Assume without loss of generality that $V\subset h^{-1}(0)$. Thus, since both $V$ and $h^{-1}(0)$ are smooth closed complex submanifolds of pure codimension $q$ in $\b_n$, it follows that $V$ is open and closed in $h^{-1}(0)$ and hence a union of connected components of $h^{-1}(0)$.  It is then clear that we may apply Theorem \ref{th:MR} to $V$ and $h$ with $U=\b_n$ and 
$\theta=dh$; the rest of initial objects in the theorem are irrelevant to ensure the conclusion of Theorem \ref{th:intro-q} and shall not be specified. Let $f\colon \b_n\to\c^q$ be the holomorphic submersion obtained in this way.

We claim that $f$ satisfies the conclusion of Theorem \ref{th:intro-q}. Indeed, by properties {\it iii)} and {\it iv)} in Theorem \ref{th:MR}, we infer that the connected components of the fibres $f^{-1}(c)$ $(c\in\c^q)$ of $f$ are all, except perhaps those contained in $V\subset f^{-1}(0)$, complete; i.e.\ conditions {\rm (ii)} and {\rm (iii)} in Theorem \ref{th:intro-q}. On the other hand, Theorem \ref{th:MR} {\it i)} trivially implies Theorem \ref{th:intro-q} {\rm (i)}. If in addition $h$ admits a continuous extension to $\b_n$ with $h(z)\neq 0\in\c^q$ for all $z\in\b_n\setminus V$, then the final assertion in Theorem \ref{th:MR} enables us to choose $f$ with $f^{-1}(0)=V$, thereby proving the final assertion in Theorem \ref{th:intro-q}.

This completes the proof of Theorem \ref{th:intro-q}.
\end{proof}

Globevnik constructed in \cite{Globevnik2015AM} a holomorphic function on $\b_n$ whose real part, hence also whose norm, is unbounded on any proper path $[0,1)\to\b_n$ with finite length. 
Note that, since completeness of $V$ in Theorem \ref{th:MR} is not assumed, the function $f$ which we obtain does not enjoy this property.
Moreover, our method of proof does not seem to provide any information in this direction when the given submanifold is complete, and hence the question 
whether every smooth complete closed complex hypersurface in $\b_n$ is a level set of a holomorphic function on $\b_n$ which is unbounded in norm on every proper path $[0,1)\to\b_n$ with finite length
remains open.
Nevertheless, a slight modification of the proof of Theorem \ref{th:MR} enables us to prove the following extension of Corollary \ref{co:k=0}.
\begin{proposition}\label{pro:MR}
Let $0<r_1<R_1<r_2<R_2<\cdots$ be an increasing sequence with $\lim_{j\to\infty} r_j=1$. 
Then, for any increasing sequence $0<\delta_1<\delta_2<\cdots$ there is a holomorphic submersion $f\colon \b_n\to\c^q$ $(1\le q<n)$ satisfying the following condition: for any $\lambda>0$ there is  $j_\lambda\in\n$ for which if $\gamma\colon[0,1]\to \b_n$ is a path such that $|\gamma(0)|\le r_j$ and $|\gamma(1)|\ge R_j$ for some $j\ge j_\lambda$, and 
\[
	|f(\gamma(t))| < \frac1{\lambda} \quad \text{for all $t\in [0,1]$},
\]
then $\length (\gamma)>\delta_j$. 
In particular, $|f|$ is unbounded on any proper path $[0,1)\to\b_n$ with finite length, and hence the fibres $f^{-1}(c)$ $(c\in\c^q)$ of $f$ are all complete.
\end{proposition}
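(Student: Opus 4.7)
The plan is to run essentially the same inductive scheme as in the proof of Theorem \ref{th:MR}, but in a drastically simplified setting: there is no submanifold $V$ to respect, no interpolation order to preserve, and no $q$-coframe to carry along from a prescribed local submersion. Concretely, I would fix a starting holomorphic submersion $f_0\colon\b_n\to\c^q$ (which exists since $\b_n$ admits exact holomorphic $q$-coframes, e.g.\ $\theta=(dz_1,\ldots,dz_q)$, cf.\ \cite[Theorem 2.5]{Forstneric2003AM}), choose a decreasing sequence $\lambda_j\searrow 0$ with $\lambda_j<1$, and construct inductively a sequence of holomorphic submersions $f_j\colon\b_n\to\c^q$, numbers $\epsilon_j>0$, and tangent labyrinths $L_j\subset R_j\b_n\setminus r_j\overline\b_n$ satisfying the analogues of properties (1$_j$), (4$_j$), (5$_j$), (6$_j$) and a single simplified version of (7$_j$): for every $z\in L_j$ one has $|f_j(z)|>1/\lambda_j$.

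The labyrinth $L_j$ is produced by Lemma \ref{lem:labyrinth} (only condition (i) is needed; the fine control (ii) on the diameter of components is irrelevant here). To pass from $f_{j-1}$ to $f_j$, I would proceed exactly as in Case 1 of Claim \ref{cl:ohi}, with no $V$ and no $\Lambda_V$: denoting by $T_1,\ldots,T_k$ the components of $L_j$, I fatten each $T_a$ to compact convex neighborhoods $T_{a,1}\Subset T_{a,2}$ disjoint from $r_j\overline\b_n$ and pairwise disjoint, choose a vector $w_0\in\c^q$ with $|f_{j-1}(z)+w_0|>1/\lambda_j$ for all $z$ in the compact set $L_j$, and set
\[
\phi(z)=\begin{cases} f_{j-1}(z) & \text{if } z\in\b_n\setminus\bigcup_a T_{a,2},\\ f_{j-1}(z)+w_0 & \text{if }z\in\bigcup_a \mathring T_{a,1}.\end{cases}
\]
This defines a holomorphic submersion on an open neighborhood $W$ of $r_j\overline\b_n\cup L_j$, and since $w_0$ is constant, $d\phi=(df_{j-1})|_W$, so the exact holomorphic $q$-coframe $\theta_j=df_{j-1}$ on $\b_n$ trivially satisfies $\theta_j|_W=d\phi$. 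By Remark \ref{rem:pol}, the compact set $r_j\overline\b_n\cup L_j$ is polynomially convex, and Theorem \ref{th:Divisor} (with $V=\varnothing$, no interpolation constraint) yields a holomorphic submersion $f_j\colon\b_n\to\c^q$ that approximates $\phi$ uniformly on $r_j\overline\b_n\cup L_j$ to arbitrary precision $\epsilon'>0$. Choosing $\epsilon'$ small enough and $\epsilon_j$ meeting (4$_j$) and (5$_j$) via the Cauchy estimates, both (1$_j$) and the simplified (7$_j$) are secured.

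For the limit $f=\lim_{j\to\infty} f_j$, properties (1$_j$) and (4$_j$) give a holomorphic map with $|f-f_{j-1}|<\epsilon_{j-1}$ on $r_j\overline\b_n$ for every $j$, and (5$_j$) ensures that $f$ is a submersion. To verify the stated length condition, given $\lambda>0$ pick $j_\lambda$ so large that $1/\lambda+\epsilon_j<1/\lambda_j$ for all $j\ge j_\lambda$, which is possible since $\lambda_j,\epsilon_j\to 0$. If $\gamma\colon[0,1]\to\b_n$ satisfies $|\gamma(0)|\le r_j$, $|\gamma(1)|\ge R_j$ and $|f\circ\gamma|<1/\lambda$ throughout, then after truncating to a subpath lying in $r_{j+1}\b_n$ we get $|f_j\circ\gamma|\le|f\circ\gamma|+\epsilon_j<1/\lambda_j$ pointwise, so $\gamma$ avoids $L_j$ by (7$_j$), and (6$_j$) gives $\length(\gamma)>\delta_j$. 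The ``in particular'' clause follows by exactly the same decomposition into subpaths crossing the shells $R_j\b_n\setminus r_j\overline\b_n$ as in the proof of item \textit{iii)} of Theorem \ref{th:MR}.

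The only subtlety I anticipate is a bookkeeping one, namely choosing the sequences $\lambda_j,\epsilon_j$ and the tolerance $\epsilon'$ at each stage in a consistent way so that the final inequalities propagate cleanly, but this is routine once one has isolated the single surviving labyrinth condition (7$_j$). No genuine new obstacle appears beyond what is already handled in the proof of Lemma \ref{lem:ML}, since the removal of $V$ eliminates precisely the most delicate parts of that argument: the small-diameter refinement of the labyrinth, Case 2 of Claim \ref{cl:ohi}, and the tracking of $f^{-1}(0)\setminus V$.
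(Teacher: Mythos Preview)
Your proposal is correct and follows essentially the same approach as the paper's own proof: drop conditions {\rm (2$_j$)}, {\rm (3$_j$)}, {\rm (8$_j$)}, set $\Lambda_V=\varnothing$ and $\Lambda_0=L_j$, replace {\rm (7$_j$)} by the one-sided bound $|f_j|>1/\lambda_j$ on $L_j$, and run Case~1 of Claim~\ref{cl:ohi} via Theorem~\ref{th:Divisor}. You have also correctly observed that the diameter control in Lemma~\ref{lem:labyrinth}~(ii) is no longer needed here, which the paper does not state explicitly but is implicit in the simplification.
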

The novelty of Proposition \ref{pro:MR} with respect to Theorem \ref{th:MR} is that it also ensures that those proper paths $[0,1)\to \b_n$ on which $|f|$ is bounded above, but perhaps unbounded away from zero, have infinite length. We achieve this property at the cost of loosing the control on the fibre of $0\in\c^q$ under the submersion $f$. 
In the special case when $q=1$, Proposition \ref{pro:MR} was proved by Globevnik in \cite{Globevnik2016MA} but without ensuring that the function $f\in\Ocal(\b_n)$ be submersive (i.e., noncritical).
\begin{proof}[Sketch of proof]
We just point out the modifications to the proof of Theorem \ref{th:MR} which are needed to prove the proposition. On the one hand, conditions {\rm (2$_j$)}, {\rm (3$_j$)}, and {\rm (8$_j$)} in Lemma \ref{lem:ML} are not required (and do not make sense) in the current framework. On the other hand, we replace property {\rm (7$_j$)} in Lemma \ref{lem:ML} by the following more precise one:
\begin{itemize}
\item[\rm (7$_j'$)] If $z\in L_j$, then  $|f_j(z)|> 1/\lambda_j$.
\end{itemize}
This gives the conclusion of the proposition exactly in the same way that {\rm (7$_j$)} gives condition {\it iii)} in the proof of Theorem \ref{th:MR}; we omit the details.

It remains to explain how to ensure {\rm (7$_j'$)} in the inductive step. For that, using the notation in the proof of Lemma \ref{lem:ML}, it suffices to choose $\Lambda_V=\varnothing$ and $\Lambda_0=L_j$ (as it is natural since $V=\varnothing$ in our current framework), where $L_j$ is the tangent labyrinth in $R_j\b_n\setminus r_j\overline\b_n\subset\b_n$ provided by Lemma \ref{lem:labyrinth}, and follow that proof without any modification but without paying attention to the zero fibre of the submersions in the inductive construction. Finally, in order to prove the final assertion in the corollary we adapt, straightforwardly, the argument in the proof of the second part of condition {\it iii)} in Theorem \ref{th:MR}.
\end{proof}


\section{Proof of Theorem \ref{th:intro-q=1}}\label{sec:q=1}

We begin with some preparations. Let $X$ be a Stein manifold and $f\in\Ocal(X)$. A point $x\in X$ is said to be a {\em critical point} of $f$ if $df_x=0$; the set 
\[
	{\rm Crit}(f)=\{x\in X\colon df_x=0\}
\]
of all such points
is called the {\em critical locus} of the holomorphic function $f$. 
An inspection of the proofs of Theorem 2.1 in \cite{Forstneric2003AM} and Theorem 1.1 and Corollary 1.2 in \cite{Forstneric2018PAMS} gives the following extension of Theorem \ref{th:Divisor} in the case when $q=1$ and the second cohomology group with integer coefficients of $X$ vanishes.

\begin{theorem}[Forstneri\v c-Oka-Weil-Cartan theorem for functions]
\label{th:Divisor-1}
Let $X$ be a Stein manifold with $H^2(X;\z)=0$,
let $V$ be a closed complex hypersurface (possibly with singularities) in $X$, let $K\subset X$ be an $\Ocal(X)$-convex compact set, let $U\subset X$ be a neighborhood of $V\cup K$ in $X$, and assume that there is $h\in\Ocal(U)$ such that $V=h^{-1}(0)$ and ${\rm Crit}(h)\subset U$ is a closed discrete subset contained in $V\cup K$. Then, for any $\epsilon>0$ and $s\in\n$ there is $\wt h\in\Ocal(X)$ such that $V=\wt h^{-1}(0)$, $\wt h-h$ vanishes to order $s$ everywhere on $V$, $|\wt h(x)-h(x)|<\epsilon$ for all $x\in K$, and ${\rm Crit}(\wt h)={\rm Crit}(h)$.
\end{theorem}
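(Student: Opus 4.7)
The plan is to assemble the proof from three ingredients already present in the cited sources: Serre's theorem giving a global defining function for $V$, the Oka principle with approximation and interpolation for maps into the Oka manifold $\C^*=\C\setminus\{0\}$, and the coframe technique of \cite{Forstneric2003AM} that underlies the noncritical defining function theorem \cite[Theorem 1.1]{Forstneric2018PAMS}.

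First I would prepare the data. Set $D:=\text{Crit}(h)\setminus V$; since $D$ is closed and discrete in $U$ and contained in the compact $K$, it is finite. Enlarge $K$ to an $\Ocal(X)$-convex compact $K'\subset U$ containing pairwise disjoint closed balls around the points of $D$. Because $H^2(X;\Z)=0$, Serre's theorem produces $g\in\Ocal(X)$ whose divisor equals that of $h$; write $h=u\cdot g$ on $U$ with $u\in\Ocal(U)$ nowhere vanishing. Since $\C^*$ is an Oka Lie group and $H^2(X;\Z)=0$ trivializes the topological obstruction to extending nowhere-vanishing holomorphic functions, the Oka-Cartan principle with approximation and interpolation yields a nowhere vanishing $\wt u\in\Ocal(X)$ approximating $u$ on $K'$, agreeing with $u$ to order $s$ on $V$, and agreeing with $u$ to second order at each point of $D$. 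Setting $\wt h_0:=\wt u\cdot g$ already delivers $\wt h_0^{-1}(0)=V$, the $s$-jet match on $V$, and the $\epsilon$-approximation on $K$; moreover on $V$ one has $d\wt h_0=\wt u\,dg$ and $dh=u\,dg$ with $u,\wt u$ nonzero, so $\text{Crit}(\wt h_0)\cap V=\text{Crit}(h)\cap V$, while the second-order interpolation at $D$ forces $D\subset\text{Crit}(\wt h_0)$.

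The main obstacle is now to suppress any \emph{spurious} critical points of $\wt h_0$ in $X\setminus(V\cup D)$. For this I would invoke the coframe construction from \cite[Theorem 2.1]{Forstneric2003AM}: the holomorphic $(1,0)$-form $dh$ is nowhere vanishing on a neighborhood of $(V\cup K')\setminus\text{Crit}(h)$, and since for $q=1$ the obstruction to extending a holomorphic coframe lives in $H^2(X;\Z)$, which vanishes, the Oka principle for the frame bundle extends $dh$ to a continuous $1$-coframe on $X\setminus\text{Crit}(h)$, which is then deformed to a holomorphic $1$-coframe $\theta$ on $X$ that agrees with $dh$ near $V\cup D$ and vanishes precisely on $\text{Crit}(h)$. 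Applying the Forstneri\v c-Oka-Weil-Cartan theorem for submersions (Theorem \ref{th:Divisor} for $q=1$, combined with the zero-divisor clause of \cite[Theorem 1.1]{Forstneric2018PAMS}) to $\theta$ furnishes $\wt h\in\Ocal(X)$ with $d\wt h$ arbitrarily close to $\theta$ on $K'$, with $\wt h^{-1}(0)=V$, with $\wt h-h$ vanishing to order $s$ on $V$, and hence with $\text{Crit}(\wt h)=\text{Crit}(h)$ exactly. The delicate step, which is exactly what requires an inspection of the proofs in \cite{Forstneric2003AM,Forstneric2018PAMS}, is to verify that the inductive exhaustion of $X$ by $\Ocal(X)$-convex compacts in those arguments can be carried out so that at every stage the four conditions (approximation on $K$, zero divisor $V$, $s$-jet interpolation on $V$, and exact equality of critical loci) are preserved simultaneously; the jet interpolation at $D$ slots into the standard Oka principle with interpolation, while the coframe keeps $d\wt h$ nonzero off $\text{Crit}(h)$.
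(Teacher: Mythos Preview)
Your proposal gathers the right ingredients---Serre's theorem for a global defining function, the Oka principle for the nowhere-vanishing unit, and Forstneri\v c's coframe technique for controlling the critical locus---and this is indeed the spirit of the argument; note that the paper itself does not prove Theorem~\ref{th:Divisor-1} independently but simply records that it follows from inspecting the proofs in \cite{Forstneric2003AM,Forstneric2018PAMS}.

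There is, however, a genuine gap in your second step. You describe $\theta$ as ``a holomorphic $1$-coframe $\theta$ on $X$ that \dots\ vanishes precisely on $\mathrm{Crit}(h)$''; this is self-contradictory, since a $q$-coframe is by definition pointwise nonvanishing. More seriously, you then invoke Theorem~\ref{th:Divisor} (the submersion version) to produce $\wt h$, but that theorem outputs a \emph{submersion}, i.e.\ a function with $\mathrm{Crit}(\wt h)=\varnothing$. It therefore cannot deliver $\mathrm{Crit}(\wt h)=\mathrm{Crit}(h)$ when the latter is nonempty; and if you force critical points at $V_{\mathrm{sing}}\cup D$ by jet interpolation, the result is no longer a submersion and Theorem~\ref{th:Divisor} does not apply as stated. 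What the arguments in \cite{Forstneric2003AM,Forstneric2018PAMS} actually do is carry the prescribed discrete critical locus through the inductive exhaustion as part of the interpolation data from the outset, while the coframe condition is imposed only away from $\mathrm{Crit}(h)$; the jet interpolation at $\mathrm{Crit}(h)$ together with the coframe on the complement (and Hurwitz in the limit) is what pins down $\mathrm{Crit}(\wt h)=\mathrm{Crit}(h)$ exactly. A smaller quibble: the obstruction to extending the unit $u\colon U\to\C^*$ to all of $X$ is governed by $H^1$ (since $\C^*\simeq K(\Z,1)$), not by $H^2$; the hypothesis $H^2(X;\Z)=0$ enters via Serre's theorem (the existence of the global $g$), which you already invoked correctly in your first step.
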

The main novelty of Theorem \ref{th:Divisor-1} with respect to Theorem \ref{th:Divisor} (the Forstneri\v c-Oka-Weil-Cartan theorem for submersions) is, besides that the former also deals with functions with critical points, that there is no need to ensure the existence of a $1$-frame on $X$ which be compatible with $h$ on $U$. The fact that such always exists is guaranteed by the condition $H^2(X;\z)=0$.

Since the Euclidean ball $\b_n\subset\c^n$ $(n\ge 2)$ is contractible, we have that $H^2(\b_n;\z)=0$. Therefore, Theorem \ref{th:Divisor-1} applies with $X=\b_n$.

In this section we prove the following more precise version of Theorem \ref{th:intro-q=1}.

%
%
\begin{theorem}\label{th:MR-q=1}
Let $V$ be a closed complex hypersurface in $\b_n$ $(n\ge 2)$ and denote by $V_{\rm sing}$ its singular set. Let $P\subset\b_n$ be a closed discrete subset with $P\cap V=\varnothing$, let
 $U$ be an open neighborhood of $V$ in $\b_n$, and assume that there is $h\in\Ocal(U)$ with 
 \begin{equation}\label{eq:hhh}
 	\text{$h^{-1}(0)=V$\quad and\quad ${\rm Crit}(h)=V_{\rm sing}\cup (P\cap U)$.}
\end{equation}
Also choose a polynomially convex compact set $K\subset U$ and an increasing sequence $0<r_1<R_1<r_2<R_2<\cdots$ with  $K\subset r_1\b_n$ and $\lim_{j\to\infty} r_j=1$. 
Then, for any $\epsilon>0$ and any increasing sequence $0<\delta_1<\delta_2<\cdots$ there is $f\in\Ocal(\b_n)$ satisfying the following conditions.
\begin{itemize}
\item[\rm (i)] $f^{-1}(0)=V$ and $f-h$ vanishes to any given finite order everywhere on $V$. 
\smallskip
\item[\rm (ii)] $|f(z)-h(z)|<\epsilon$ for all $z\in K$.
\smallskip
\item[\rm (iii)] ${\rm Crit}(f)=V_{\rm sing}\cup P$. 
\smallskip
\item[\rm (iv)] For any $\lambda>0$ there is $j_\lambda\in\n$ for which if $\gamma\colon[0,1]\to \b_n$ is a path such that $|\gamma(0)|\le r_j$ and $|\gamma(1)|\ge R_j$ for some $j\ge j_\lambda$, and 
\[
	\lambda\le|f(\gamma(t))|\le \frac1{\lambda}\quad \text{for all $t\in [0,1]$},
\]
then $\length (\gamma)>\delta_j$. In particular, every proper path $[0,1)\to\b_n$ on which $|f|$ is bounded above and bounded away from zero has infinite length; and hence the level set $f^{-1}(c)\subset\b_n$ is complete for every $c\in f(\b_n)\setminus\{0\}$.
\end{itemize}
\end{theorem}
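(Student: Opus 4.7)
The plan is to mimic the inductive construction in the proof of Theorem \ref{th:MR}, with Theorem \ref{th:Divisor-1} playing the role of Theorem \ref{th:Divisor}; this is available because $H^2(\b_n;\z)=0$. Since $P$ may be infinite while each application of Theorem \ref{th:Divisor-1} can only control critical points lying in $V\cup K$ for a compact set $K$, we exhaust $P$ as $P_j=P\cap r_j\overline\b_n$ (each finite by discreteness of $P$) and arrange that the inductively constructed sequence $f_j\in\Ocal(\b_n)$ satisfies $f_j^{-1}(0)=V$ and ${\rm Crit}(f_j)=V_{\rm sing}\cup P_j$. The base $f_0$ comes from a first application of Theorem \ref{th:Divisor-1}: enlarge the domain of $h$ by attaching local quadratic models $h_p(z)=a_p+(z-p)^{\!\top}M_p(z-p)$ with $a_p\neq 0$ and $M_p$ invertible on small disjoint balls around each $p\in P_0\setminus U$, then apply Theorem \ref{th:Divisor-1} with the enlarged compact set $K_0=K\cup P_0$, which is polynomially convex by iterated Kallin's lemma. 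This yields $f_0\in\Ocal(\b_n)$ with $f_0^{-1}(0)=V$, ${\rm Crit}(f_0)=V_{\rm sing}\cup P_0$, $f_0-h$ vanishing to the prescribed finite order on $V$, and $|f_0-h|<\epsilon/2$ on $K$.

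In the inductive step, given $f_{j-1}$, take the tangent labyrinth $L_j\subset R_j\b_n\setminus r_j\overline\b_n$ from Lemma \ref{lem:labyrinth} with each component of diameter so small that $|f_{j-1}|<\lambda_j$ on the subcollection $\Lambda_V$ of components meeting $V$ (possible by continuity since $f_{j-1}\equiv 0$ on $V$); the complement $\Lambda_0=L_j\setminus\Lambda_V$ is disjoint from $V$ and admits a constant $C>1$ with $|Cf_{j-1}|>1/\lambda_j$ on $\Lambda_0$. Following Case~2 of Claim \ref{cl:ohi}, define a holomorphic modification $\phi$ of $f_{j-1}$ on an open neighborhood $U_j$ of $V\cup r_j\overline\b_n\cup L_j$ as $\phi=f_{j-1}$ near $V\cup r_j\overline\b_n\cup\Lambda_V$, $\phi=Cf_{j-1}$ near $\Lambda_0$, and, in order to activate the new critical points, $\phi=f_{j-1}+Q_p$ in a small ball around each $p\in P_j\setminus P_{j-1}$, where $Q_p$ is a polynomial with $Q_p(p)=0$ and $dQ_p(p)=-df_{j-1}(p)$; the transition annuli around $\Lambda_0$ and around the new critical points are excluded from $U_j$. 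Then $\phi^{-1}(0)=V$ on $U_j$ and ${\rm Crit}(\phi)=V_{\rm sing}\cup P_j\subset V\cup K_j'$, where $K_j'=r_j\overline\b_n\cup L_j$ is polynomially convex by Remark \ref{rem:pol}.

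Apply Theorem \ref{th:Divisor-1} to $\phi$, $V$, $K_j'$, $U_j$ with tolerance $\epsilon'<\epsilon_j$, obtaining $f_j\in\Ocal(\b_n)$ with $f_j^{-1}(0)=V$, ${\rm Crit}(f_j)=V_{\rm sing}\cup P_j$, $f_j-\phi$ vanishing to the prescribed order on $V$, and $|f_j-\phi|<\epsilon'$ on $K_j'$. Take $\epsilon_j<\epsilon_{j-1}/2$ so that $\sum_j\epsilon_j<\infty$, and then tighten $\epsilon'$ to guarantee three further conditions: (a) the labyrinth dichotomy persists after the approximation, namely $|f_j|<\lambda_j$ on $\Lambda_V$ and $|f_j|>1/\lambda_j$ on $\Lambda_0$; (b) each local modification $Q_p$ for $p\in P_j\setminus P_{j-1}$ is $\epsilon_j$-small on $K_j'$, so it does not spoil the approximation; and (c) by Cauchy estimates on $R_{j-1}\overline\b_n$, $|df_j-df_{j-1}|$ is dominated by a small fixed fraction of $|df_{j-1}|$ outside any prescribed open neighborhood of ${\rm Crit}(f_{j-1})$.

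The limit $f=\lim_{j\to\infty}f_j\in\Ocal(\b_n)$ inherits condition (i) from $f_j-h$ vanishing to the prescribed order on $V$ at each step, condition (ii) from the approximation at the initial step, and $f^{-1}(0)=V$ by Hurwitz applied off $V$; condition (iv) follows verbatim from the argument for condition (iii) of Theorem \ref{th:MR} using the $\Lambda_V/\Lambda_0$ dichotomy together with $\lambda_j\to 0$. The main obstacle is condition (iii), the equality ${\rm Crit}(f)=V_{\rm sing}\cup P$. The inclusion $\supseteq$ is immediate: for each $p\in V_{\rm sing}\cup P$, one has $p\in{\rm Crit}(f_j)$ as soon as $p\in P_j$, and $df_j\to df$ pointwise forces $df(p)=0$. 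The reverse inclusion is the delicate point: the estimates in (c) above must telescope so that for every compact $K''\subset\b_n\setminus(V_{\rm sing}\cup P)$ one obtains a uniform lower bound $|df_j|\geq c_{K''}>0$ for all large $j$, which passes to the limit and rules out spurious critical points. Orchestrating this telescoping --- essentially a function-theoretic strengthening of condition (5$_j$) of Lemma \ref{lem:ML} --- is the principal new technical ingredient required beyond the submersion setting of Theorem \ref{th:MR}.
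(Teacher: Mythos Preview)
Your overall strategy is sound, but you make the inductive step more complicated than necessary, and this introduces an inconsistency. The paper avoids adding critical points during the induction altogether: by the results underlying Theorem \ref{th:Divisor-1} (in particular \cite[Corollary 2.2]{Forstneric2003AM}), one can extend $h$ at the very start to $f_0\in\Ocal(\b_n)$ with $f_0^{-1}(0)=V$ and ${\rm Crit}(f_0)=V_{\rm sing}\cup P$ for the \emph{full} discrete set $P$, not just $P_0$; after slightly shrinking the shells so that $(V_{\rm sing}\cup P)\cap(R_j\overline\b_n\setminus r_j\b_n)=\varnothing$ for every $j$, each subsequent $f_j$ produced by Theorem \ref{th:Divisor-1} again satisfies ${\rm Crit}(f_j)=V_{\rm sing}\cup P$. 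Your concern that Theorem \ref{th:Divisor-1} ``can only control critical points lying in $V\cup K$ for a compact set $K$'' takes the stated hypothesis too literally: the approximation results it packages allow any closed discrete critical locus.

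This matters because your device for ``activating'' the points of $P_j\setminus P_{j-1}$ forces you to excise transition annuli around them from the domain $U_j$ of $\phi$; but these annuli lie inside $r_j\overline\b_n$, so $U_j$ is no longer a neighbourhood of $K_j'=r_j\overline\b_n\cup L_j$, and Theorem \ref{th:Divisor-1} cannot be applied as you describe. (Shrinking $K_j'$ to avoid the annuli destroys polynomial convexity.) Once the incremental-critical-point mechanism is dropped, the inductive $\phi$ is simply $f_{j-1}$ on $\b_n\setminus\Delta_2$ and any noncritical function with modulus larger than $1/\lambda_j$ on $\mathring\Delta_1$; no quadratic patches and no coframe bookkeeping are needed, so the specific choice $\phi=Cf_{j-1}$ from Case~2 of Claim \ref{cl:ohi} is also superfluous.

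For condition (iii) in the limit, since every $f_j$ has the same critical locus $V_{\rm sing}\cup P$, the paper simply invokes Hurwitz's theorem together with {\rm (2$_j$)} and {\rm (3$_j$)}. Your telescoping Cauchy-estimate scheme --- essentially reinstating an analogue of condition {\rm (5$_j$)} from Lemma \ref{lem:ML}, adapted to a fixed discrete critical set --- is a legitimate alternative route to ruling out spurious critical points of $f$, but it is no longer the ``principal new technical ingredient'' once the construction is set up as above.
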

By the proofs of \cite[Theorem 2.1]{Forstneric2003AM} and \cite[Corollary 1.2]{Forstneric2018PAMS} (see Theorem \ref{th:Divisor-1}), for any $V$, $P$, and $U$ as in the statement of Theorem \ref{th:MR-q=1} there are holomorphic functions $h\colon U\to\c$ satisfying the requirements in \eqref{eq:hhh}. Therefore, the theorem applies to all closed complex hypersurfaces (possibly with singularities) in $\b_n$. 

It is clear in view of condition {\rm (iii)} that if the hypersurface $V$ in Theorem \ref{th:MR-q=1} is smooth and we choose $P=\varnothing$, then the holomorphic function $f\colon\b_n\to\c$ which we obtain is noncritical: ${\rm Crit}(f)=\varnothing$. In this case, the family of connected components of the level sets of $f$ is a nonsingular holomorphic foliation of $\b_n$ by smooth closed complex hypersurfaces all which, except perhaps those contained in $V$, are complete.

The proof of Theorem \ref{th:MR-q=1} follows very closely the one of Theorem \ref{th:MR} but using Theorem \ref{th:Divisor-1} instead of Theorem \ref{th:Divisor}.
Recall that a holomorphic function on a complex manifold is a submersion if and only if it has no critical points. As pointed out above, in this framework we do not need to deal with coframes on $\b_n$, and hence we may perform the same argument as that in the proof of Theorem \ref{th:MR} but without needing to arrange an analogue of condition {\rm (c)} in Claim \ref{cl:ohi}. Also the analogue of condition {\rm (d)} in that claim is now ensured from the fact that $H^2(\b_n;\z)=0$. 

%
%
\begin{proof}[Proof of Theorem \ref{th:MR-q=1}]
By Theorem \ref{th:Divisor-1} we may assume without loss of generality that $h\in\Ocal(\b_n)$, $h^{-1}(0)=V$, and ${\rm Crit}(h)=V_{\rm sing}\cup P$; see also \cite[Corollary 2.2]{Forstneric2003AM}.
Since $V_{\rm sing}\cup P$ is a closed discrete subset of $\b_n$, we may also assume that 
\begin{equation}\label{eq:P}
	(V_{\rm sing}\cup P)\cap R_j\overline\b_n\setminus r_j\b_n=\varnothing\quad \text{for all $j\in\n$;}
\end{equation} 
otherwise we just replace each pair $r_j<R_j$ by another one $r_j'<R_j'$ satisfying $r_j\le r_j'<R_j'\le R_j$ and the above condition.

Call $f_0=h$, $\epsilon_0=\epsilon$, and $L_0=\varnothing$. Choose a sequence of positive numbers $1>\lambda_1>\lambda_2>\cdots$ with $\lim_{j\to\infty}\lambda_j=0$.
Reasoning as in the proof of Lemma \ref{lem:ML}, we may inductively construct a sequence $S_j=\{f_j,\epsilon_j,L_j\}$ $(j\in\n)$, where
 $f_j\colon \b_n\to\c$ is a holomorphic function,
 $\epsilon_j>0$ is a number, and
 $L_j$ is a tangent labyrinth in $R_j\b_n\setminus r_j\overline \b_n$ (see Definition \ref{def:labyrinth}),
 such that the following hold for all $j\in\n$.
\begin{itemize}
\item[\rm (1$_j$)] $|f_j(z)-f_{j-1}(z)|<\epsilon_j$ for all $z\in r_j\overline\b_n$.
\smallskip
\item[\rm (2$_j$)] $f_j^{-1}(0)=V$ for all $z\in V$  and $f_j-h$ vanishes to the given finite order everywhere on $V$.
\smallskip
\item[\rm (3$_j$)] ${\rm Crit}(f_j)=V_{\rm sing}\cup P$. 
\smallskip
\item[\rm (4$_j$)] $\displaystyle 0<\epsilon_j<\epsilon_{j-1}/2$.
\smallskip
\item[\rm (5$_j$)] If $\gamma\colon [0,1]\to \b_n$ is a path such that $|\gamma(0)|\le r_j$, $|\gamma(1)|\ge R_j$, and $\gamma([0,1])\cap L_j=\varnothing$, then $\length(\gamma)> \delta_j$.
\smallskip
\item[\rm (6$_j$)] If $z\in L_j$, then either $|f_j(z)|<\lambda_j$ or $|f_j(z)|>1/\lambda_j$.
\end{itemize} 
(Comparing with the list of conditions in Lemma \ref{lem:ML}, we have merged {\rm (2$_j$)} and {\rm (3$_j$)} there into the single one {\rm (2$_j$)} here, whereas we have not included now any analogue of {\rm (8$_j$)}; these simplifications can be done since in the current framework we have $f_0^{-1}(0)=V$ and complete control on the zero level set of all the functions in the deformation procedure. Moreover, we have replaced condition {\rm (5$_j$)} in Lemma \ref{lem:ML} by the new {\rm (3$_j$)} here.)

Note that the triple $S_0=\{f_0,\epsilon_0,L_0\}$ satisfies {\rm (2$_0$)}, {\rm (3$_0$)}, and {\rm (6$_0$)}, whereas {\rm (1$_0$)}, {\rm (4$_0$)}, and {\rm (5$_0$)} are vacuous. The details of the inductive construction are very similar to those in the proof of Theorem \ref{th:MR} and we do not include them. We just point out that an analogue of Claim \ref{cl:ohi} (but only ensuring conditions {\rm (a)} and {\rm (b)}; see the discussion preceding this proof) can be arranged by defining
\[
	W\ni z\longmapsto \phi(z)=\left\{
	\begin{array}{ll}
	f_{j-1}(z) & \text{if }z\in \b_n\setminus\Delta_2 \medskip	
	\\
	\phi_0(z) & \text{if }z\in \mathring \Delta_1,
	\end{array}
	\right.
\]
where $W=(\b_n\setminus\Delta_2)\cup\mathring\Delta_1\subset\b_n$ is chosen as in that claim (see \eqref{eq:Wdef}) and $\phi_0\colon \Delta_1\to\c$ is any noncritical holomorphic function with $|\phi_0(z)|>1/\lambda_j$ for all $z\in\Delta_1$. 
This choice and \eqref{eq:P} guarantee that $\phi\colon W\to\c$ satisfies {\rm (a)} and {\rm (b)} in Claim \ref{cl:ohi} and also that $\phi^{-1}(0)=V$ and ${\rm Crit}(\phi)=V_{\rm sing}\cup P\subset \b_n\setminus (R_j\overline \b_n\setminus r_j \b_n) \subset \b_n\setminus\Delta_2$. It turns out that Theorem \ref{th:Divisor-1} applied to $\phi$ furnishes $f_j\in\Ocal(\b_n)$ satisfying conditions {\rm (1$_j$)}--{\rm (6$_j$)} above.

By {\rm (1$_j$)} and {\rm (4$_j$)}, there is a limit holomorphic function 
$
	f=\lim_{j\to\infty} f_j\colon\b_n\to\c
$
which, by {\rm (2$_j$)}, {\rm (3$_j$)}, and Hurwitz's theorem, satisfies conditions {\rm (i)} and {\rm (iii)} in the theorem. The rest of conditions are checked as in the proof of Theorem \ref{th:MR}.
\end{proof}
%
%
\begin{proof}[Proof of Theorem \ref{th:intro-q=1}]
Let $V\subset \b_n$ be a closed complex hypersurface (possibly with singularities) and denote by $V_{\rm sing}$ its singular set. By \cite[Corollary 1.2]{Forstneric2018PAMS}, there is $h\in\Ocal(\b_n)$ such that $h^{-1}(0)=V$ and ${\rm Crit}(h)=V_{\rm sing}$. Applying Theorem \ref{th:MR-q=1} to $h$ with $P=\varnothing$ we obtain $f\in\Ocal(\b_n)$ satisfying the conclusion of Theorem \ref{th:intro-q=1} (the rest of the initial objects in the statement of Theorem \ref{th:intro-q=1} are irrelevant to this aim).
\end{proof}

To finish this section it is perhaps worth stating the following result for future reference; it is trivially ensured by inspection of the proofs of Theorem \ref{th:MR}, Proposition \ref{pro:MR}, and Theorem \ref{th:MR-q=1}. 
%
%
\begin{remark}
Theorem \ref{th:MR}, Proposition \ref{pro:MR}, and Theorem \ref{th:MR-q=1} remain to hold true if one replaces $\b_n$ by $\c^n$ and takes the sequence $0<r_1<R_1<r_2<R_2<\cdots$ with 
$\lim_{j\to\infty} r_j=+\infty$. 
\end{remark}


\section{Complete complex hypersurfaces in a Stein manifold equipped with a Riemannian metric 
come in foliations}\label{sec:pseudo}

Once the existence of complete closed complex hypersurfaces in the ball of $\c^n$ $(n\ge 2)$ was known \cite{Globevnik2015AM}, it naturally appeared the question whether there is a general class of domains admitting this type of hypersurfaces. It is very clear that the arguments in \cite{Globevnik2015AM,AlarconGlobevnikLopez2019Crelle,AlarconGlobevnik2017C2} may be adapted to construct complete closed complex hypersurfaces in any (geometrically) convex domain of $\c^n$ (this is done in \cite{AlarconLopez2016JEMS} for the case when $n=2$ with a different technique). Going further in this direction, Globevnik \cite{Globevnik2016MA} proved that every pseudoconvex domain $\Omega\subset\c^n$ admits a holomorphic function $f\in\Ocal(\Omega)$ whose real part is unbounded above on every proper path $[0,1)\to\Omega$ with finite length; this ensures the existence of a (possibly singular) holomorphic foliation of $\Omega$ by complete closed complex hypersurfaces. (Recall that a domain $\Omega\subset\c^n$ $(n\ge 2)$ is said to be {\em pseudoconvex} if there is a strictly plurisubharmonic exhaustion function $\Omega\to \r$. This happens if and only if $\Omega$ is a domain of holomorphy and if and only if $\Omega$ is a Stein manifold; not every $n$-dimensional Stein manifold is biholomorphic to a pseudoconvex domain in $\C^n$, though. Convex domains in $\c^n$ are pseudoconvex. We refer to Range \cite{Range1986GTM} and H\"ormander \cite{Hormander1990NH} for background on the subject.) Taking into account Sard's theorem, this shows that every pseudoconvex domain in $\c^n$ admits a smooth complete closed complex hypersurface; see \cite[Corollary 1.2]{Globevnik2016MA}. Furthermore, Charpentier and  Kosi\'nski \cite{CharpentierKosinski2019} have builded labyrinths of compact sets in any given pseudoconvex Runge domain $\Omega\subset\C^n$ with analogous properties as those of the tangent labyrinths in balls constructed in \cite{AlarconGlobevnikLopez2019Crelle} and explained in Subsec.\ \ref{ss:l}; see Lemma \ref{lem:labyrinth} and Remark \ref{rem:pol} and compare with \cite[Lemma 2.4]{CharpentierKosinski2019}. Making use of these new labyrinths by Charpentier and  Kosi\'nski, one can easily adapt the arguments in Section \ref{sec:proof} to prove an extension of Theorem \ref{th:intro-q} in which the ball $\b_n$ is replaced by an arbitrary pseudoconvex Runge domain of $\C^n$; in particular, one obtains that every such domain admits nonsingular holomorphic submersion foliations by smooth complete closed complex submanifolds of any pure codimension $q\in\{1,\ldots,n-1\}$. Likewise, an analogue of Theorem \ref{th:intro-q=1} for pseudoconvex domains follows in this way.

In this section, we go a bit further in this direction and show the following analogue of Theorem \ref{th:intro-q=1} in which the ball is replaced by an arbitrary Stein manifold equipped with a Riemannian metric.
%
%
\begin{theorem}\label{th:pseudo-f=0}
Let $X$ be a Stein manifold of dimension $n\ge 2$ equipped with a Riemannian metric $\ggot$.
If $V$ is a smooth closed complex hypersurface in $X$ such that the normal bundle $N_{V/X}$ of $V$ in $X$ is trivial, 
then there is a holomorphic function $f\in \Ocal(X)$ with the following properties.
\begin{itemize}
\item[\rm (i)]  $f(z)=0$ for all $z\in V$. 
\smallskip
\item[\rm (ii)] Every proper path $[0,1)\to X$ on which $|f|$ is bounded above and bounded away from zero has infinite length (with respect to the metric $\ggot$). In particular, the level set $f^{-1}(c)\subset X$ is complete for every $c\in f(X)\setminus\{0\}$.
\smallskip
\item[\rm (iii)] $f^{-1}(0)\setminus V$ is either the empty set or a complete closed complex hypersurface (possibly with singularities) in $X$.
\smallskip
\item[\rm (iv)] $f$ has no critical points on $V$.
\end{itemize}
\end{theorem}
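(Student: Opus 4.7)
The plan is to run a relative version of the inductive construction used in the proofs of Theorem~\ref{th:MR} and Theorem~\ref{th:MR-q=1}, with the spherical shells in $\b_n$ replaced by shells $K_{j+1}\setminus K_j$ in a normal $\Ocal(X)$-convex exhaustion $K_1\Subset K_2\Subset\cdots$ of $X$, and with the tangent labyrinths of Lemma~\ref{lem:labyrinth} replaced by the Riemannian labyrinths developed by Globevnik in \cite{Globevnik2016MA}.

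First, starting from the triviality of $N_{V/X}$ and the Docquier--Grauert tubular neighborhood theorem, one obtains a neighborhood $U$ of $V$ and a holomorphic submersion $h\colon U\to\c$ with $V\subset h^{-1}(0)$. Applying the Forstneri\v c-Oka-Weil-Cartan extension (cited here as Theorem~\ref{th:Divisor-1}, noting that the hypothesis $H^2(X;\z)=0$ is only used to force $f^{-1}(0)=V$, which is not required in the present statement), one produces a base holomorphic function $f_0\in\Ocal(X)$ with $f_0|_V\equiv 0$, noncritical on $V$, and such that $V$ is a union of components of $f_0^{-1}(0)$.

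Next, one inductively constructs data $(f_j,\epsilon_j,L_j)_{j\in\n}$, where $L_j\subset K_{j+1}\setminus K_j$ is a labyrinth of the Globevnik type satisfying three properties: (a) $K_j\cup L_j$ is $\Ocal(X)$-convex; (b) every rectifiable path in $X\setminus L_j$ from $K_j$ to $X\setminus K_{j+1}$ has $\ggot$-length greater than $\delta_j$; and (c) every component of $L_j$ whose $\ggot$-distance to $V$ is below a prescribed threshold has $\ggot$-diameter less than $\eta_j$, where $\eta_j\downarrow 0$. Splitting $L_j=\Lambda_V\cup\Lambda_0$ into the components meeting $V$ and those disjoint from $V$, one applies the holomorphic modification of Claim~\ref{cl:ohi} on a neighborhood of $K_j\cup L_j$ to produce a function $\phi$ that coincides with $f_{j-1}$ near $K_j\cup V\cup\Lambda_V$ and satisfies $|\phi|>1/\lambda_j$ on $\Lambda_0$. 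Theorem~\ref{th:Divisor-1} applied on $X$ then furnishes $f_j\in\Ocal(X)$ approximating $\phi$ on $K_j\cup L_j$, vanishing to high order along $V$, and consequently free of critical points on $V$ by Hurwitz. The diameter control~(c) combined with continuity of $f_{j-1}$ at $V$ ensures that $|f_j|$ stays small on $\Lambda_V$, while $\epsilon_j$ is chosen to decrease fast enough to guarantee convergence of the sequence $\{f_j\}$.

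The limit $f=\lim_j f_j\in\Ocal(X)$ then satisfies (i) and (iv) by the inductive constraints together with Hurwitz's theorem, (ii) by the labyrinth length bound exactly as in the proof of Theorem~\ref{th:MR}~{\it iii)}, and (iii) by the argument of Theorem~\ref{th:MR}~{\it iv)}, using that $f^{-1}(0)\setminus V$ avoids the labyrinth $L_j$ for all sufficiently large $j$. The principal obstacle is the Riemannian labyrinth itself: items~(a) and~(c) together demand a construction on an arbitrary Stein manifold equipped with an arbitrary Riemannian metric that simultaneously gives $\Ocal(X)$-convex unions (via a Kallin-type argument replacing the one underpinning Remark~\ref{rem:pol}) and has pieces near $V$ concentrated into $\ggot$-tubes of arbitrarily small radius. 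Both ingredients can be extracted from \cite{Globevnik2016MA}; the genuinely new point is their compatibility with the submersion-style modification of $f_{j-1}$ that is carried out for $\b_n$ in Claim~\ref{cl:ohi} and Lemma~\ref{lem:ML}.
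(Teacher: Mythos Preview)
Your proposal takes a genuinely different route from the paper and has a real gap.

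The paper does \emph{not} work intrinsically on $X$. Instead it chooses a proper holomorphic embedding $\psi\colon X\hookrightarrow\c^N$, sets $\Sigma=\psi(X)$ and $A=\psi(V)$, extends a local defining function for $A$ to a noncritical $g_0\in\Ocal(\c^N)$ via Cartan, and then runs the entire labyrinth construction in $\c^N$ using the Euclidean tangent labyrinths of Definition~\ref{def:labyrinth} in spherical shells $R_j\b_N\setminus r_j\overline\b_N$. Completeness with respect to $\ggot$ is recovered a posteriori by bounding $|d\psi|$ on the compact preimages $\psi^{-1}(R_j\overline\b_N\setminus r_j\b_N)$ and choosing $\delta_j$ to beat that bound. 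The limit is $f=g\circ\psi$ with $g=\lim g_j\in\Ocal(\c^N)$.

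The gap in your approach is the intrinsic labyrinth. Your item~(a), that $K_j\cup L_j$ be $\Ocal(X)$-convex, is the crux: in $\c^n$ this comes from Kallin's lemma applied to geometrically convex pieces (Remark~\ref{rem:pol}), and there is no substitute on a general Stein manifold. The reference \cite{Globevnik2016MA} does not supply what you claim: Globevnik's method for pseudoconvex domains is precisely the embed-into-$\c^N$ trick the present paper uses (see the reference to \cite[Proof of Lemma~2.3]{Globevnik2016MA} at the end of the proof here), not an intrinsic labyrinth with holomorphic convexity and small-diameter control. A related issue is your use of Theorem~\ref{th:Divisor-1}: the hypothesis $H^2(X;\z)=0$ is not there merely to force $f^{-1}(0)=V$; it is what guarantees the existence of a compatible $1$-coframe, so that one need not verify the coframe hypothesis of Theorem~\ref{th:Divisor} by hand at each step.

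There is also a diagnostic red flag: your scheme, if it went through, would produce $f$ noncritical on all of $X$ (since each $f_j$ would be), and hence a nonsingular foliation of $X$ by complete hypersurfaces. The paper explicitly states this is \emph{not} achieved by its method and records it as an open problem (Conjecture~5.2). The price the paper pays for the embedding trick is exactly that $g|_\Sigma$ need not be noncritical even though $g\in\Ocal(\c^N)$ is; that is why only condition~(iv) (no critical points \emph{on} $V$) is claimed.
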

The metric $\ggot$ in the theorem need not be complete; the conclusion is trivial in that case.
Note that {\rm (i)} and {\rm (iv)} imply that $V$ is a union of components of $f^{-1}(0)$.
Note also that, by these conditions and the Docquier and Grauert tubular neighborhood theorem  \cite{DocquierGrauert1960MA}, the assumption that $N_{V/X}$ be trivial is necessary in the theorem. 
I wish to thank a colleague who pointed out to me that Theorem \ref{th:pseudo-f=0} holds true for any Stein manifold equipped with a Riemannian metric, instead of just for pseudoconvex domains, thereby giving rise to enlargement of the scope of the result.

The components of the level sets $f^{-1}(c)$ $(c\in \c)$ of the function $f$ in the theorem form a (possibly singular) holomorphic foliation of $X$ by connected closed complex hypersurfaces all which, except perhaps those contained in $V\subset f^{-1}(0)$, are complete (with respect to the metric induced by $\ggot$). If $V$ is complete, then all the leaves in the foliation are complete.  Taking into account Sard's theorem, this shows that every Stein manifold of dimension $n\ge 2$ equipped with an arbitrary Riemannian metric  admits a smooth complete closed complex hypersurface.
 
By \cite[Theorem I]{Forstneric2003AM}, every Stein manifold 
admits a holomorphic function without critical points. However, our method of proof does not seem to enable us to ensure that the function $f\in\Ocal(X)$ furnished by Theorem \ref{th:pseudo-f=0} be noncritical; this is a weak point of this result in comparison with Theorem \ref{th:intro-q=1}. In particular, the following remains an open problem.
\begin{conjecture}
Every Stein manifold $X$ of dimension $n\ge 2$ equipped with a Riemannian metric admits a nonsingular holomorphic foliation by smooth complete closed complex hypersurfaces. 
\end{conjecture}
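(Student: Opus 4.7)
The plan is to mimic the inductive construction behind Theorem \ref{th:MR} in the case $q=1$, with the ball $\b_n$ replaced by $X$, and with the Euclidean spherical shells $R_j\b_n\setminus r_j\overline\b_n$ replaced by shells of an exhaustion of $X$ by $\Ocal(X)$-convex compacta. By Forstneri\v c's theorem \cite{Forstneric2003AM}, $X$ admits a holomorphic submersion $f_0\colon X\to\c$, and in particular an exact holomorphic $1$-coframe $\theta=df_0$ on $X$. Exhaust $X$ by $\Ocal(X)$-convex compact sets $K_1\Subset K_2\Subset\cdots$ with $\bigcup_j K_j=X$, and fix sequences $\delta_j\nearrow\infty$, $\lambda_j\searrow 0$. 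The goal is to build inductively holomorphic submersions $f_j\colon X\to\c$, each agreeing with $f_{j-1}$ up to a small error on $K_j$, together with finite compacta $L_j\subset K_{j+1}\setminus\mathring K_j$ such that $|f_j|>1/\lambda_j$ on $L_j$ and every $\ggot$-rectifiable path joining $K_j$ with $X\setminus K_{j+1}$ that avoids $L_j$ has length exceeding $\delta_j$. A Cauchy-controlled limit $f=\lim f_j\colon X\to\c$ then produces, exactly as in the proof of Theorem \ref{th:MR}, a noncritical holomorphic function whose level sets are all complete with respect to $\ggot$; the nonsingular foliation of $X$ by smooth complete closed complex hypersurfaces is then the family of components of the fibres of $f$.

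Given $f_{j-1}$, the inductive step splits into two parts. First, one must construct $L_j$: cover $K_{j+1}\setminus \mathring K_j$ by finitely many holomorphic coordinate neighborhoods on each of which $\ggot$ is uniformly comparable to a Euclidean chart metric, and in each such chart insert a piece modeled on the tangent labyrinth of Definition \ref{def:labyrinth}, with component diameters chosen so small that the metric distortion between chart length and $\ggot$-length is negligible. The required length bound then follows from the Euclidean Lemma \ref{lem:labyrinth} applied chart-by-chart. Second, as in Case 1 of Claim \ref{cl:ohi}, one produces a holomorphic submersion $\phi$ on a neighborhood of $K_j\cup L_j$ coinciding with $f_{j-1}$ near $K_j$ and satisfying $|\phi|>1/\lambda_j$ on $L_j$, by adding a large constant on each component of a disjoint system of holomorphic tubes around the components of $L_j$. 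A compatible $1$-coframe $\theta_j$ on $X$ with $\theta_j=d\phi$ on the working neighborhood is manufactured as in Case 2 of Claim \ref{cl:ohi} (gluing $\theta$ and $d\phi$ via a continuous, nowhere-vanishing, $\c^*$-valued scalar on the transition tubes), and Theorem \ref{th:Divisor} delivers the desired global submersion $f_j\colon X\to\c$ approximating $\phi$ on $K_j\cup L_j$.

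The single hypothesis that makes the whole scheme go through for the ball, and that is genuinely problematic on a general Stein manifold, is the polynomial convexity of $r_j\overline\b_n\cup L_j$ established in Remark \ref{rem:pol} via Kallin's lemma. Its Stein replacement is the requirement that $K_j\cup L_j$ be $\Ocal(X)$-convex, and there is no off-the-shelf Kallin-type theorem producing this for arbitrary finite collections of compact sets drawn in local charts inside an arbitrary Stein manifold. One viable route is to realize each component of $L_j$ as a compact set contained in a Stein neighborhood admitting a strictly plurisubharmonic peak function dominated, outside a small tube, by a defining function of $K_j$; iterating such separation statements component by component would then give $\Ocal(X)$-convexity of $K_j\cup L_j$ in analogy with the recursive application of Kallin's lemma used for Remark \ref{rem:pol}.

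The anticipated main obstacle is thus precisely this hull-theoretic step. Everything else in the Euclidean proof, namely the Cauchy convergence, the submersive Oka--Weil--Cartan approximation via Theorem \ref{th:Divisor}, the manufacture of adapted $1$-coframes, and the translation of labyrinth avoidance into completeness, adapts in a direct manner to the Stein setting provided one can build, in each shell $K_{j+1}\setminus\mathring K_j$, a labyrinth whose union with $K_j$ is $\Ocal(X)$-convex and whose components have arbitrarily small $\ggot$-diameter. If such labyrinths can be constructed in full generality, then the conjecture follows from the proof of Theorem \ref{th:MR} essentially verbatim; I expect that the absence of a sufficiently flexible Stein version of Kallin's lemma, rather than any failure of the Oka-principle inputs, is exactly what forces the paper to settle for Theorem \ref{th:pseudo-f=0} and to leave the nonsingular statement open.
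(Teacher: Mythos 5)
The statement you are trying to prove is labeled \emph{Conjecture} in the paper and is explicitly left open (see the sentence immediately preceding it: ``our method of proof does not seem to enable us to ensure that the function $f\in\Ocal(X)$ furnished by Theorem \ref{th:pseudo-f=0} be noncritical''). There is therefore no proof in the paper to compare you against; what the paper does prove is Theorem \ref{th:pseudo-f=0}, which gives a defining function on $(X,\ggot)$ with all off-zero level sets complete and $f$ noncritical \emph{on $V$}, but which may well have critical points elsewhere. Your text, which you frame as a proof proposal, correctly recognizes this: it is a research plan with an acknowledged gap, not a proof.

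Your diagnosis of the obstruction is partly right but incomplete. You identify the $\Ocal(X)$-convexity of $K_j\cup L_j$ as the crux (the role of Remark \ref{rem:pol} and Kallin's lemma in the ball case), and this is indeed one genuine obstacle. But you gloss over a second obstacle of comparable weight, namely the construction of a labyrinth with the length-blocking property in an \emph{arbitrary} Riemannian metric $\ggot$ on an arbitrary Stein manifold. Your plan is to cover the compact shell $K_{j+1}\setminus\mathring K_j$ by finitely many holomorphic charts and insert Euclidean-style labyrinth pieces in each, then invoke Lemma \ref{lem:labyrinth} ``chart-by-chart.'' This does not obviously produce a global blocking set: a $\ggot$-rectifiable path crossing the shell may weave between charts, entering and leaving each chart near its boundary and thereby avoiding the locally inserted pieces while accumulating only a bounded amount of length. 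The tangent-labyrinth construction of \cite{AlarconGlobevnikLopez2019Crelle} that underlies Lemma \ref{lem:labyrinth} exploits the global spherical geometry of $R\b_n\setminus r\overline\b_n$ and does not patch from local pieces. Moreover, your suggested Kallin-type separation argument also relies on the labyrinth components being geometrically convex sets in Euclidean space, a notion with no intrinsic analogue on $X$. The paper sidesteps both of these obstacles in Section \ref{sec:pseudo} by embedding $X$ properly into $\c^N$ and using Euclidean labyrinths in $\c^N$ applied to $\Sigma=\psi(X)$, together with the bound \eqref{eq:djgrande} on $|d\psi|$ to control metric distortion; this is exactly what sacrifices control of $dg|_\Sigma$ away from $A$ and hence global noncriticality of the pullback $f=g\circ\psi$, which is why the Conjecture remains open. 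In short: your plan is reasonable and your instinct about the hull issue is sound, but it is not a proof, and the labyrinth-in-a-general-metric problem is a second, independent difficulty that your chart-by-chart suggestion does not resolve.
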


In the proof of Theorem \ref{th:pseudo-f=0} we shall use the methods developed in the previous sections together with the ideas in \cite{Globevnik2016MA}, the classical Cartan extension theorem, and an Oka-Weil-Cartan type theorem for noncritical holomorphic functions from \cite{Forstneric2003AM}.
\begin{proof}[Proof of Theorem \ref{th:pseudo-f=0}]
Since $X$ is an $n$-dimensional Stein manifold, there is a proper holomorphic embedding $\psi\colon X\to \c^N$ for some integer $N>n$ (see \cite{Narasimhan1960AJM}). Call 
\begin{equation}\label{eq:SA}
	\text{$\Sigma=\psi( X)$\quad and\quad $A=\psi(V)$}. 
\end{equation}
By the assumptions on $V$ and $ X$, we have that $\Sigma$ is a smooth connected closed complex submanifold in $\c^N$ and $A$ is a smooth closed complex hypersurface in $\Sigma$ such that the normal bundle $N_{A/\Sigma}=T\Sigma|_A/TA$ of $A$ in $\Sigma$ is trivial; in particular, both $\Sigma$ and $A$ are Stein manifolds.
By \cite[Proof of Corollary 2.4]{Forstneric2003AM}, there is a noncritical holomorphic function $h\in\Ocal(\Sigma)$ which defines $A$ in an open neighborhood of $A$ in $\Sigma$; i.e., $A$ is the zero level set of the restriction of $h$ to a neighborhood of $A$ in $\Sigma$.
By Cartan's extension theorem, $h$ extends to a function $g_0\in\Ocal(\c^N)$ and, since this extension remains noncritical everywhere in $\Sigma$, \cite[Theorem 2.1]{Forstneric2003AM} enables us to assume that $g_0\colon\c^N\to\c$ is noncritical as well. 

Let $0<r_1<R_1<r_2<R_2<\cdots$ be a divergent sequence such that $A\cap r_1\b_N\neq\varnothing$. 
Since $\psi\colon X\to\c^N$ is a proper map, the set 
\[
	K_j=\psi^{-1}(R_j\overline\b_N\setminus r_j\b_N)\subset  X
\]
is compact, and hence $|d\psi|$ attains its maximum there. Let $0<\delta_1<\delta_2<\cdots$ be a divergent sequence such that
\begin{equation}\label{eq:djgrande}
	\delta_j\ge  \max\{|d\psi_x|\colon x\in K_j\}\quad \text{for all }j\in\n.
\end{equation}

Call $L_0=\varnothing$ and $O_0=\varnothing$, and choose a decreasing sequence of positive numbers $1>\lambda_1>\lambda_2>\cdots$ with $\lim_{j\to\infty}\lambda_j=0$. Also fix an integer $s\ge 2$ and a sequence $\{\epsilon_j\}_{j\in\z_+}$ with $0<\epsilon_j<\epsilon_{j-1}/2$ for all $j\in\n$.
%
%
\begin{lemma}\label{lem:pseudo}
There is a sequence $S_j=\{g_j,L_j,O_j\}$ $(j\in\n)$, where
\begin{itemize}
\item $g_j\in\Ocal(\c^N)$ is a noncritical holomorphic function,
\smallskip
\item $L_j$ is a tangent labyrinth in $R_j\b_N\setminus r_j\overline \b_N$ (Definition \ref{def:labyrinth}), and
\smallskip
\item $O_j$ is a neighborhood of $\Sigma\cap L_j$ in $\Sigma\cap R_j\b_N\setminus r_j\overline \b_N$,
\end{itemize}
 such that the following conditions hold true for all $j\in\n$.
\begin{itemize}
\item[\rm (1$_j$)] $|g_j(z)-g_{j-1}(z)|<\epsilon_j$ for all $z\in r_j\overline\b_N$.
\smallskip
\item[\rm (2$_j$)] $g_j(z)=0$ for all $z\in A$ and $g_j-h$ vanishes to order $s$ everywhere on $A$.
\smallskip
\item[\rm (3$_j$)] If $\gamma\colon [0,1]\to \c^N$ is a path such that $|\gamma(0)|\le r_j$, $|\gamma(1)|\ge R_j$, and $\gamma([0,1])\cap L_j=\varnothing$, then $\length(\gamma)> \delta_j$.
\smallskip
\item[\rm (4$_j$)] If $z\in L_j$, then either $|g_j(z)|<\lambda_j$ or $|g_j(z)|>1/\lambda_j$.
\smallskip
\item[\rm (5$_j$)] $g_j^{-1}(0)\cap O_i\setminus A=\varnothing$ for all $i\in\{0,\ldots,j\}$.
\end{itemize}
\end{lemma}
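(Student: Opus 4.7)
My plan is to argue by induction on $j$, paralleling Lemma \ref{lem:ML} closely but working in $\c^N$ and applying Theorem \ref{th:Divisor} with $q=1$ to preserve noncriticality. The base tuple $S_0=\{g_0,\varnothing,\varnothing\}$ trivially satisfies (2$_0$), while the remaining conditions are vacuous. For the inductive step, the observation I plan to exploit is that, since $g_{j-1}-h$ vanishes to order $s\ge 2$ on $A$ and $h$ is noncritical on $\Sigma$ near $A$, the restriction $g_{j-1}|_\Sigma$ is itself noncritical in a neighborhood of $A$ in $\Sigma$ and defines $A$ there as a reduced hypersurface. Combining this with continuity of $g_{j-1}$, I first pick $\eta>0$ so small that $|g_{j-1}|<\lambda_j$ throughout the $\eta$-neighborhood of $A$ in $R_j\overline\b_N$ and that $g_{j-1}|_\Sigma$ has no zeros off $A$ within that $\eta$-neighborhood. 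I then invoke Lemma \ref{lem:labyrinth} to obtain a tangent labyrinth $L_j\subset R_j\b_N\setminus r_j\overline\b_N$ satisfying (3$_j$), with every connected component of diameter less than $\eta$, and decompose $L_j=\Lambda_A\cup\Lambda_0$ as in the proof of Lemma \ref{lem:ML}. The diameter bound forces $\Lambda_A$ into the $\eta$-neighborhood of $A$, so $|g_{j-1}|<\lambda_j$ on $\Lambda_A$.

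As in Case 1 of Claim \ref{cl:ohi}, I then construct a noncritical holomorphic function $\phi$ on a neighborhood $W\subset\c^N$ of $r_j\overline\b_N\cup A\cup L_j$ by setting $\phi=g_{j-1}$ near $r_j\overline\b_N\cup A\cup\Lambda_A$ and $\phi=g_{j-1}+w_0$ on pairwise disjoint small thickenings of the components of $\Lambda_0$, for a constant $w_0\in\c$ of norm large enough that $|\phi|>1/\lambda_j$ throughout $\Lambda_0$. Because $w_0$ is constant, $d\phi=dg_{j-1}|_W$, so the globally defined exact holomorphic $1$-coframe $\theta=dg_{j-1}$ on $\c^N$ restricts to $d\phi$ on $W$. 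The compact set $r_j\overline\b_N\cup L_j\subset\c^N$ is polynomially convex by Remark \ref{rem:pol}, whose proof via Kallin's lemma applies in $\c^N$ without change. Applying Theorem \ref{th:Divisor} with $X=\c^N$, $q=1$, subvariety $V=A$, submersion $\phi\colon W\to\c$, and coframe $\theta$, I obtain a noncritical $g_j\in\Ocal(\c^N)$ arbitrarily close to $\phi$ on $r_j\overline\b_N\cup L_j$ with $g_j-\phi$ vanishing to order $s$ on $A$. Combined with $\phi=g_{j-1}$ near $A$ and (2$_{j-1}$), a sufficiently fine approximation yields (1$_j$), (2$_j$), and (4$_j$).

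To define $O_j$ and verify (5$_j$), I repeat the stability argument for $g_j$: its restriction $g_j|_\Sigma$ is noncritical near $A$ in $\Sigma$ and defines $A$ there, so near $\Sigma\cap\Lambda_A$ any sufficiently small $\Sigma$-neighborhood meets $g_j^{-1}(0)$ exactly in $A$; near $\Sigma\cap\Lambda_0$ a sufficiently small neighborhood is disjoint from $g_j^{-1}(0)$, since $|g_j|$ is close to $|\phi|>1/\lambda_j$ there. Letting $O_j$ be the union of these two neighborhoods gives $g_j^{-1}(0)\cap O_j\setminus A=\varnothing$. For each older index $i<j$, the previously chosen $O_i$ lies in $r_j\overline\b_N$ because $R_i<r_j$; provided the approximation tolerance at this step is taken small enough (using the stability margin built into $O_i$ at step $i$ by analogous considerations), (5$_{j-1}$) propagates from $g_{j-1}$ to $g_j$ on $O_i$, closing the induction.

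The main obstacle I foresee is the bookkeeping required for (5$_j$): each $O_i$ is selected once at step $i$ but must remain compatible with all subsequent $g_j$ for $j>i$. I plan to handle this, as is standard in such inductive constructions, by exploiting the openness of the property ``$g|_\Sigma$ noncritical near $A$ and defining $A$'' under sufficiently small holomorphic perturbations, together with the control on $|g_j-g_i|$ on $O_i\subset r_j\overline\b_N$ coming from summing the bounds in (1$_k$) over $i<k\le j$. This bookkeeping plays in our argument the role of condition (8$_j$) in Lemma \ref{lem:ML}.
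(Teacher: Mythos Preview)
Your proposal is correct and follows essentially the same route as the paper's proof. The paper also argues by induction, paralleling Lemma \ref{lem:ML}: it uses the high-order agreement of $g_{j-1}$ with $h$ on $A$ to conclude that $g_{j-1}|_\Sigma$ is noncritical near $A$ and defines $A$ on a neighborhood $U_j\subset\Sigma$, then invokes Lemma \ref{lem:labyrinth} so that every component of $L_j$ meeting $A$ has $\Sigma\cap T\subset U_j$, builds $\phi$ exactly as you do (Case~1 of Claim \ref{cl:ohi}, adding a constant on $\Lambda_0$), and finally approximates using \cite[Theorem 2.1]{Forstneric2003AM}, which for $q=1$ is the same tool as Theorem \ref{th:Divisor} with the exact $1$-coframe $\theta=dg_{j-1}$ you supply. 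Your treatment of $(5_j)$ via the local defining property of $g_j|_\Sigma$ near $A$ and the propagation to older $O_i$ through the cumulative bounds from $(1_k)$ matches the paper's handling of this condition.
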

\begin{proof}
We argue as in the proof of Lemma \ref{lem:ML} but using  \cite[Theorem 2.1]{Forstneric2003AM}. 
The basis of the induction is given by $S_0=\{g_0,L_0,O_0\}$. The only difference in the inductive step appears when trying to ensure condition {\rm (5$_j$)}; note that in the current framework $O_j$ is not a neighborhood of $L_j$ in $\c^N$ but just of $\Sigma\cap L_j$ in $\Sigma$. We explain how to arrange this issue.

Assume that for some $j\in\n$ we already have tuples $S_i=\{g_i,L_i,O_i\}$ enjoying the corresponding properties for all $i\in\{0,\ldots,j-1\}$. Since $h\colon \Sigma\to\c$ is noncritical, the latter assertion in {\rm (2$_{j-1}$)} ensures that $d(g_{j-1}|_\Sigma)_z\neq 0$ for all $z\in A$, and hence, by continuity, also for all $z$ in an open neighborhood $W_j$ of $A$ in $\Sigma$. Therefore, we have that $A\subset (g_{j-1}|_\Sigma)^{-1}(0)$ (see {\rm (2$_{j-1}$)}) and both $A$ and $(g_{j-1}|_\Sigma)^{-1}(0)$ are smooth closed complex hypersurfaces in $\Sigma$. It turns out that $A$ is a union of components of $(g_{j-1}|_\Sigma)^{-1}(0)$, and thus  $g_{j-1}|_\Sigma$ defines $A$ on a neighborhood $U_j\subset W_j$ of $A$ in $\Sigma$:
\begin{equation}\label{eq:A}
	A=(g_{j-1}|_\Sigma)^{-1}(0)\cap U_j.
\end{equation}

We now apply Lemma \ref{lem:labyrinth} to obtain a tangent labyrinth $L_j$ in $R_j\b_N\setminus r_j\overline \b_N$ satisfying {\rm (3$_j$)} and the following condition: if $T$ is a component of $L_j$ which has nonempty intersection with $A$, then $\Sigma\cap T\subset U_j$, and hence 
\begin{equation}\label{eq:A1}
	g_{j-1}(z)\neq 0\quad \text{for all } z\in \Sigma\cap T\setminus A.
\end{equation} 
By \eqref{eq:A} and the compactness of $\Sigma\cap\overline  R_j\b_N\setminus r_j\b_N$, to ensure this condition it suffices to choose $L_j$ with all of its connected components having small enough diameter. Next, reasoning as in the proof of Lemma \ref{lem:ML} (hence, again, choosing $L_j$ such that all of its components intersecting $A$ have small enough diameter), \cite[Theorem 2.1]{Forstneric2003AM} provides us with a noncritical function $g_j\in\Ocal(\c^N)$ satisfying conditions {\rm (1$_j$)}, {\rm (2$_j$)}, and {\rm (4$_j$)}, and such that $g_j(z)\neq 0$ for all $z\in\Sigma\cap L_j\setminus A$ (take into account \eqref{eq:A1} and cf.\ \eqref{eq:L1} and conditions {\rm (a)} and {\rm (b)} in  Claim \ref{cl:ohi}). It follows that there is an open neighborhood $O_j$ of $\Sigma\cap L_j$ in $\Sigma\cap R_j\b_N\setminus r_j\overline \b_N$ such that
\begin{equation}\label{eq:OiSigma}
	g_j^{-1}(0)\cap O_j\setminus A=\varnothing.
\end{equation}
If the approximation of $g_{j-1}$ by $g_j$ on $r_j\overline\b_N\supset\bigcup_{i=0}^{j-1} O_i$ is close enough (see {\rm (1$_j$)}), then conditions \eqref{eq:OiSigma} and {\rm (5$_{j-1}$)} ensure {\rm (5$_j$)}. This completes the proof.
\end{proof}

We continue the proof of Theorem \ref{th:pseudo-f=0}. Since $0<\epsilon_j<\epsilon_{j-1}/2$ for all $j\in\n$, it follows from {\rm (1$_j$)} that there is a limit holomorphic function
$
	g=\lim_{j\to\infty} g_j\colon\c^N\to\c.
$ 
 Moreover, taking into account that each $g_j\in\Ocal(\c^N)$ is noncritical and conditions {\rm (5$_j$)}, Hurwitz's theorem guarantees that $g$ is noncritical and 
\[
	g(z)\neq 0\quad \text{for all $z\in \big(\Sigma\cap \bigcup_{j\ge 1} L_j\big)\setminus A$}.
\]
From this condition and properties {\rm (1$_j$)}--{\rm (5$_j$)}, we infer the following.
\begin{itemize}
\item[\rm (a)] $g(z)=0$ for all $z\in A$ and $g-h$ vanishes to order $s\ge 2$ everywhere on $A$. (Recall that $A\subset \Sigma$ and  $h=g_0|_\Sigma$.)
\smallskip
\item[\rm (b)] For any $\lambda>0$ there is $j_\lambda\in\n$ for which if $\gamma\colon[0,1]\to \c^N$ is a path such that $|\gamma(0)|\le r_j$ and $|\gamma(1)|\ge R_j$ for some $j\ge j_\lambda$, and 
$\lambda\le|g(\gamma(t))|\le \frac1{\lambda}$ for all $t\in [0,1]$,
then $\length(\gamma)>\delta_j$.
\smallskip
\item[\rm (c)] If $\gamma\colon[0,1]\to g^{-1}(0)\cap\Sigma\setminus A$ is a path such that $|\gamma(0)|\le r_j$ and $|\gamma(1)|\ge R_j$ for some $j\ge 1$, then $\length (\gamma)>\delta_j$.
\end{itemize}
This is checked as in the proof of Theorem \ref{th:MR} and we omit the details. 

Consider the holomorphic function
\begin{equation}\label{eq:fgpsi}
	f=g\circ \psi\colon X\to\c,
\end{equation}
where $\psi\colon  X\to \c^N$  is the proper holomorphic embedding that was chosen at the very beginning of the proof. We claim that $f$ satisfies the conclusion of the theorem.

Indeed, to check {\rm (ii)} it suffices to show that if $\gamma\colon[0,1]\to X$ is a path for which there is $\lambda>0$ such that $|\psi(\gamma(0))|\le r_j$ and $|\psi(\gamma(1))|\ge R_j$ for some $j\ge j_\lambda$, and
$\lambda\le |f(\gamma(t))|\le\frac1{\lambda}$ for all $t\in [0,1]$,
then $\length_\ggot(\gamma)>1$. (Here $j_\lambda$ is the integer provided by condition {\rm (b)}  and $\length_\ggot$ denotes the length operator in the Riemannian manifold $(X,\ggot)$.) So, let $\gamma$ be such a path and set $\wt \gamma=\psi\circ\gamma\colon [0,1]\to\c^N$. Since $g\circ\wt \gamma=g\circ \psi\circ\gamma=f\circ\gamma$, we have that $|\wt\gamma(0)|\le r_j$, $|\wt\gamma(1)|\ge R_j$, and
$\lambda\le |g(\wt\gamma(t))|\le\frac1{\lambda}$ for all $t\in [0,1]$,
and hence condition {\rm (b)} ensures that $\length(\wt\gamma)>\delta_j$. From this,  \eqref{eq:djgrande}, and the definition of $\wt \gamma$ we obtain that
\[
	\length_\ggot(\gamma)  \ge  \frac{\length(\wt \gamma)}{\max\{|d\psi_x|\colon x\in K_j\}}>1.
\]
This proves {\rm (ii)}; we refer to \cite[Proof of Lemma 2.3]{Globevnik2016MA} for further details in the case when $X$ is a pseudoconvex domain in $\C^n$. 

On the other hand, by \eqref{eq:SA}, the definition of $f$, and the bijectivity of $\psi\colon X\to\Sigma$, it turns out that 
\[
	\psi(f^{-1}(0)\setminus V)=\psi(f^{-1}(0))\setminus \psi(V)=\Sigma\cap g^{-1}(0)\setminus A. 
\]
Taking into account this, a similar argument to the above one but using condition {\rm (c)} shows that every proper path $\gamma\colon[0,1)\to  X$ with $\gamma([0,1))\subset f^{-1}(0)\setminus V$ has infinite length with respect to the Riemannian metric $\ggot$.
 This ensures condition {\rm (iii)} in the statement of the theorem. 
 
Finally, conditions {\rm (i)} and {\rm (iv)} follow from {\rm (a)}, the definition of $f$ in \eqref{eq:fgpsi}, and the facts that $h\in\Ocal(\Sigma)$ is noncritical, $A=\psi(V)\subset\Sigma$, and $\psi\colon  X\to\Sigma$ is a regular biholomorphic map.
This concludes the proof of Theorem \ref{th:pseudo-f=0}.
\end{proof}


\subsection*{Acknowledgements}
The author was partially supported by the State Research Agency (SRA) and European Regional Development Fund (ERDF) via the grant no.\ MTM2017-89677-P, MICINN, Spain.

I wish to thank Franc Forstneri\v c and Josip Globevnik for their encouragement and for helpful remarks which led to improve the exposition, to the former also for explanations about the methods in the papers \cite{Forstneric2003AM,Forstneric2018PAMS}.




\noindent Antonio Alarc\'{o}n

\noindent Departamento de Geometr\'{\i}a y Topolog\'{\i}a e Instituto de Matem\'aticas (IEMath-GR), Universidad de Granada, Campus de Fuentenueva s/n, E--18071 Granada, Spain.

\noindent  e-mail: {\tt alarcon@ugr.es}

\end{document}